\newtheorem{theorem}{Theorem}
\newtheorem{corollary}[theorem]{Corollary}
\newtheorem{lemma}[theorem]{Lemma}
\newtheorem{proposition}[theorem]{Proposition}
\newtheorem{remark}[theorem]{Remark}
\newtheorem{definition}[theorem]{Definition}
\newtheorem{example}[theorem]{Example}
\newcommand{\R}{\mathbb{R}}
\newcommand{\U}{\mathcal{U}}
\DeclareMathOperator{\id}{id}
\newcommand{\X}{X}
\newcommand{\Y}{Y}
\newcommand\Lone{{\mathrm{L}}^1}
\def\<#1,#2>{\left<#1,#2\right>}
\let\bar\overline
\def\PP{{\cal P}}
\title{{Variational problems involving unequal dimensional optimal transport}\footnote{The authors want to thank the anonymous referees for their observations and careful reading of the paper. B.P. is pleased to acknowledge the support of National Sciences and Engineering Research Council of Canada Discovery Grant numbers 412779-2012 and 04658-2018. L.N. would like to acknowledge the support from the project MAGA ANR-16-CE40-0014 (2016-2020).}}
\author{
Luca Nenna\thanks{Laboratoire de Mathématiques d’Orsay, Univ. Paris-Sud, CNRS, Université Paris-Saclay, 91405 Orsay, France.  \texttt{luca.nenna@math.u-psud.fr}}  
\and 
Brendan Pass\thanks{Department of Mathematical and Statistical Sciences, 632 CAB, University of Alberta, Edmonton, Alberta, Canada, T6G 2G1  \texttt{pass@ualberta.ca}} 
}
\begin{document}

\maketitle
 
 \begin{abstract}
 	This paper is devoted to variational problems on the set of probability measures which involve optimal transport between unequal dimensional spaces.  In particular, we study the minimization of a functional consisting of the sum of a term reflecting the cost of  (unequal dimensional) optimal transport between one fixed and one free marginal, and another functional of the free marginal (of various forms).  Motivating applications include Cournot-Nash equilibria where the strategy space is lower dimensional than the space of agent types.  For a variety of different forms of the term described above, we show that a nestedness condition, which is known to yield much improved tractability of the optimal transport problem, holds for any minimizer.  Depending on the exact form of the  functional, we exploit this to find local differential equations characterizing solutions, prove convergence of an iterative scheme to compute the solution, and prove regularity results.
 	
 \end{abstract}
 \tableofcontents

 \section{Introduction}
 This paper is devoted to the study of functionals of the form
 \begin{equation}\label{eqn: variational problem}
  \mathcal{J}(\mu,\nu) = \mathcal {T}_c(\mu,\nu)+\mathcal{F}(\nu) + \mathcal{G}(\mu)
  \end{equation}
  depending on probability measures $\mu \in P(\bar \X)$, $\nu \in P(\bar \Y)$ on the closures of open, bounded domains $X \subseteq \mathbb{R}^m$, $Y \subseteq \mathbb{R}^n$, where  $\mathcal {T}_c(\mu,\nu)$ is the optimal transport distance between $\mu$ and $\nu$ induced by the cost function $c: \bar \X \times \bar \Y \rightarrow \mathbb{R}$:
 
 \begin{equation}\label{eqn: ot problem}
 \mathcal {T}_c(\mu,\nu) = \inf_{\gamma \in \Pi(\mu,\nu)}\int_{\X \times \Y}c(x,y)d\gamma(x,y).
 \end{equation}
 Here, $\Pi(\mu,\nu)$ is the set of all probability measures on the product space $X \times \Y$ whose marginals are $\mu$ and $\nu$.  
 
 We are interested in characterizing the minimizers of $(\mu,\nu) \mapsto \mathcal{J}(\mu,\nu)$, as well as the minimizers of the subproblems obtained when either $\mu$ or $\nu$ is fixed:  $\nu \mapsto \mathcal{J}(\mu,\nu)$ and $\mu \mapsto \mathcal{J}(\mu,\nu)$.  Problems of these general forms, for various choices of the functionals $\mathcal{F}$ and $\mathcal{G}$, arise in a wide variety of applications, including: gradient flows on Wasserstein space (where the minimization $\nu \mapsto \mathcal{J}(\mu,\nu)$ represents one step in a discrete gradient flow), displacement interpolation (when $\mathcal{F}$ is the Wasserstein distance to a second probability measure) Cournot-Nash equilibria in game theory, city planning problems, hedonic pricing in economics \cite{Carlier_wasserstein_barycenter,abgcmor,abgcptrl,buttazzo2006three,buttazzo2009mass,buttazzo2005model,carlier2003optimal,carlier2010matching,carlier2005variational}, and have consequently received a fair bit of attention in the literature.  Most of the analytical progress so far, however, has been restricted to the case where the dimensions of the spaces $X$ and $\Y$ coincide, $m=n$.  In a wide variety of applications, particularly in economics and game theory, however, these dimensions may differ: in the Cournot-Nash problem, for example, $X$ parameterizes a space of agents, (differentiated by $m$ characteristics $x_1,x_2,...,x_m$ of agent $x =(x_1,...,x_m)$) while $\Y$ represents a space of strategies.  The dimensions of these spaces reflect the number of characteristics used to differentiate among agents, and the number of parameters involved in the choice of strategy, respectively, and need not be the same in general.

 Minimizers of \eqref{eqn: variational problem} when $m=n$ have been studied extensively; for the subproblems where one marginal is fixed, under mild conditions on the functionals $\mathcal{F}$ and $\mathcal{G}$, existence and uniqueness of minimizers has been established, and, depending on the precise forms of $\mathcal{F}$ and $\mathcal{G}$, various regularity results and bounds on solutions exist.   Solutions can be characterized by partial differential equations, and various numerical schemes for solving them have been proposed (see \cite[chapter 6 and section 7.4]{santambook} and the references therein and \cite{Blanchet2017,abgcptrl,abgcmor,Carlier-NumericsBarycenters,peyre2017computational,peyregradientflows}).

 Our present purpose is to initiate the analysis of minimizers of $\mathcal{J}$ (often with $\mu$ or $\nu$ fixed) when $m>n$.  Motivated by matching problems in economics \cite{ChiapporiMcCannPass15p}, the second named author together with collaborators has recently introduced conditions under which the unequal dimensional optimal transport problem \eqref{eqn: ot problem} is relatively tractable \cite{PassM2one}\cite{mccann2018optimal}.  More precisely, when the target $\Y$ is unidimensional ($m>n=1$), the condition, known as \emph{nestedness}, allows one to construct almost closed form solutions from the cost function $c$ and marginals $\mu$ and $\nu$ \cite{PassM2one}.  For higher dimensional targets, ($m>n>1$), an analogous condition ensures that a certain, generally non-local, partial differential equation on the lower dimensional space $\Y$ characterizing the solution is, in fact, local and degenerate elliptic \cite{mccann2018optimal}. These conditions, when present, are powerful tools for analyzing solutions; in particular, nestedness is conjectured in \cite{PassM2one} to be necessary for the continuity of optimal maps (the high dimensional version's necessity and sufficiency for potentials to be $C^2$ and strongly elliptic was verified in \cite{mccann2018optimal}, under mild topological conditions) and computation of solutions to nested problems is presumably much simpler than non-nested ones.
 
 The nestedness condition, and its higher dimensional counterpart (which we will  refer to as generalized nestedness hereafter) are \textit{joint} conditions on the cost $c$ and marginals $\mu$ and $\nu$, whereas in the present context, only the cost and \textbf{one} of the marginals (\textbf{neither} in the case of double minimizations) is prescribed; the other marginal is part of the solution to the problem.
 
  We prove here that, under various conditions on $c$, $\mu$ and $\Y$,  $(c,\mu,\nu)$ is nested whenever $\nu$ minimizes $\nu \mapsto \mathcal{J}(\mu,\nu)$, for a variety of different choices of the functional $\mathcal{F}$; analogous results for certain specific forms of $\mathcal G$ are also established for minimizations on the higher dimensional space, $\mu \mapsto \mathcal{J}(\mu,\nu)$, and for double minimizations $(\mu,\nu) \mapsto \mathcal{J}(\mu,\nu)$.  We go on to demonstrate that this a priori guarantee of nestedness makes the problem of characterizing or identifying  the minimizers much more tractable; in different contexts, depending on the precise form of $\mathcal F$, we establish that solutions can be characterized by (local) differential equations, can be computed numerically by a convergent iterative scheme, or can be derived in almost closed form.  In a forthcoming companion paper \cite{NennaPass2} focusing on the Cournot-Nash problem pioneered by Blanchet-Carlier \cite{abgcmor, blanchet2014remarks, abgcptrl}, we exhibit completely solved examples, in which the conditions ensuring nestedness are verified and the resulting differential equation solved numerically.

 \paragraph{Main results.}
 The main results we establish in the paper are the following:
 \begin{itemize}
 \item Theorem  \ref{lemma: nestedness condition}  characterizes nestedness for a given model $(c,\mu,\nu)$ when the target is one dimensional ($n=1$).  Its consequences include Corollaries \ref{cor: sufficient conditions for nestedness} and \ref{cor: nestedness from bounds on mu}, which give sufficient conditions for nestedness in terms of either a lower bound on $\nu$, which depends on $c$ and $\mu$,  or an upper bound on $\mu$, depending on $c$ and $\nu$.
 \item Theorem \ref{nestedness_congestion} ensures nestedness of the model when the lower dimensional measure $\nu$ minimizes $\nu \mapsto \mathcal J (\mu,\nu)$, $\mathcal F$ is a congestion term and the target is one dimensional;
 
 \item Theorem \ref{lemma: nestedness from interaction} provides  sufficient conditions for generalized nestedness when the lower dimensional measure $\nu$ minimizes $\nu \mapsto \mathcal J (\mu,\nu)$ (with the target dimension $n$ not necessarily one) and the functional $\mathcal F$ is composed of interaction and potential terms;
 
 \item Theorem \ref{bestReply} generalizes a previous result in \cite{blanchet2014remarks}, providing convergence of the best reply scheme to compute minimizers of $\nu \mapsto \mathcal J (\mu,\nu)$  when $\mathcal F$ is made of interaction and potential terms;
 
 \item Theorem \ref{high_dimensional_congestion} proves nestedness when the higher dimensional measure $\mu$ minimizes $\mu \mapsto \mathcal J (\mu,\nu)$, $\mathcal G$ is a congestion functional and the target is one dimensional $(n=1)$;
 
 \item Theorem \ref{double_minimization}  provides conditions guaranteeing nestedness for minimizers $(\mu,\nu)$ of the double minimization problem $(\mu, \nu) \mapsto \mathcal J (\mu,\nu)$, when $\mathcal F$ is composed of interaction and potential terms, $\mathcal G$ is of congestion type, and the target is one dimensional;
 
 \item  Theorem \ref{hedonical_nestedness} considers the case in which the functional $\mathcal F(\nu) :=T_{c_2}(\mu_2,\nu)$ reflects the cost of optimal transport to a second fixed measure, as is the case in the hedonic pricing problem in economics, so that, after relabeling $\mu$ and $c$ from \eqref{eqn: variational problem} as $\mu_1$ and $c_1$, the goal is to minimize $\nu \mapsto T_{c_1}(\mu_1,\nu)+T_{c_2}(\mu_2,\nu)$. When the target is one dimensional, the theorem implies that under a  condition we call \textit{hedonic nestedness} 
  one can find  solutions $\nu$ to this problem almost explicitly.  Among other consequences, this condition ensures nestedness of  $(c_i, \mu_i,\nu)$ for both $i=1$ and $2$.
 \end{itemize}

 \paragraph*{Organization of the paper.}
In section 2, we recall the basics of unequal dimensional optimal transport and, for one dimensional targets $n=1$, establish a sufficient condition for nestedness  which relies only on bounds on the densities $\bar \mu$ and $\bar \nu$ of $\mu(x)=\bar \mu(x)dx$ and $\nu(x)=\bar \nu(y)dy$, rather than complete knowledge of these marginals.  This result will be used in subsequent sections, but it may also be of independent interest.  In section 3, we establish nestedness of solutions for several different forms of the functional $\mathcal F$: congestion terms on either the higher or lower dimensional domain (when the target is one dimensional), as well as potential and interaction terms on the lower dimensional domain. Section 4 is then reserved for the analysis of hedonic pricing problems, where, like the first term in \eqref{eqn: variational problem},  $\mathcal{F}(\nu) =\mathcal{T}_{c_2}(\mu_2, \nu)$ reflects optimal transport to a second higher dimensional marginal.

 \section{Optimal Transport between unequal dimensions}
 \label{recall}
We recall here some basic facts about the optimal transportation problem \eqref{eqn: ot problem}.
 
 
Assuming that $c:\bar \X\times\bar \Y\rightarrow\R$ is bounded and continuous then problem \eqref{eqn: ot problem} always admits at least one solution.  We will assume throughout this paper that $c \in C^2(\bar \X \times \bar \Y)$ satisfies the \emph{twist} condition, which asserts that for each $x \in \X$
\begin{equation} 
y \mapsto D_xc(x,y) \text{ is injective on } \Y,
\end{equation}
as well as the \emph{non-degeneracy} condition, asserting that the $m \times n$ matrix $D^2_{xy}c(x,y)$ of mixed second order partial derivatives has full rank for each $(x, y) \in \bar \X \times \bar \Y$ (as $m >n$, this means $D^2_{xy}c(x,y)$ has rank $n$).



Before discussing the case $m>n$, it is important to highlight that the Monge-Kantorovich problem  admits a dual formulation which is useful in order to understand the solution to \eqref{eqn: ot problem} 

\begin{equation}
 \mathcal {T}^{dual}_c(\mu,\nu):=\sup_{(u,v)\in\U}  \displaystyle \int_{\X} u(x)d\mu(x)+\int_{\Y} v(y)d\nu(y),
\end{equation}
where $\U:=\{ (u,v)\in\Lone(\mu)\times\Lone(\nu)\;:\; u(x)+v(y)\leq c(x,y)\;\text{on}\;\X\times\Y\}$. 
The interesting fact is that $ \mathcal {T}_c(\mu,\nu)= \mathcal {T}^{dual}_c(\mu,\nu)$.


Under mild conditions (for instance, if $c$ is differentiable, as we are assuming here, $X$ connected and $\bar \mu(x) >0$ for all $x \in X$ \cite{santambook}), there is a unique solution $(u,v)$ to the dual problem, up to the addition $(u,v)\mapsto(u+C,v-C) $ of constants adding to $0$, known as the Kantorovich potentials, and these potentials are $c$-concave; that is, they satisfy
$$
u(x) =v^c(x):=\min_{y \in \Y}[c(x,y) -v(y)], \text{ }v(y) =u^c(y):=\min_{x \in \X}[c(x,y) -u(x)],
$$

  This solution is used to define generalized nestedness.  The following definition is slightly adapted from \cite{mccann2018optimal}.\footnote{The almost everywhere differentiability of $v(y)$ holds \textit{automatically} if $\nu$ is absolutely continuous with respect to Lebesgue measure, as is assumed in \cite{mccann2018optimal}; since we will at times want to establish results without this assumption here, we add the differentiability to the definition of nestedness.  Note that in \cite{mccann2018optimal}, a strengthening of the condition below, involving a second order condition on $v$ is also considered.  Since this plays no role in the present work, we omit it.}
\begin{definition}\label{def: high dim nestedness}
When $m>n$, we will say that the model $(c, \mu, \nu)$  satisfies the generalized \textit{nestedness} condition if for $\nu$ almost every $y$ the potential $v$ is differentiable and we have
\begin{equation}\label{eqn: high dimensional nestedness}
\begin{split}
\partial^cv(y)&:=\{x: u(x)+v(y) =c(x,y)\}\\&  =X_=(y,Dv(y)) := \{x: Dv(y) =D_yc(x,y)\}.
\end{split}
\end{equation}
\end{definition}
The containment $\partial^cv(y)\subseteq X_=(y,Dv(y))$ holds automatically throughout the domain of $Dv(y)$; it is therefore the opposite containment that distinguishes nested from non-nested models.   The origin of the term nestedness lies in the one dimensional target setting, in which case the  condition is equivalent to nestedness of certain \textit{super-level} sets of $x \mapsto \frac{\partial c}{\partial y}(x,y)$ \cite{PassM2one}.  This is discussed in more detail below (see Proposition \ref{nested}).

If both $\nu$ and $\mu$ are absolutely continuous, the potential $v$ satisfies the Monge-Ampere type equation almost everywhere \cite{mccann2018optimal}\footnote{Note that, here and below, our notation differs somewhat from \cite{PassM2one} and \cite{mccann2018optimal}, since we have adopted the convention of minimizing, rather than maximizing, in \eqref{eqn: ot problem}.}:

 \begin{equation}
\label{eqn: unequal MA}
\bar\nu(y)=\int_{\partial^cv(y)}\dfrac{\det(D_{yy}^2c(x,y)-D^2v(y))}{\sqrt{|\det(D^2_{yx}c D^2_{xy}c)(x,y)|}}\bar\mu(x)d\mathcal H^{m-n}(x),
\end{equation}
where, as before, $\bar \mu(x): =\frac{d \mu}{d x}(x)$ and $\bar \nu(y): =\frac{d \nu}{d y}(y)$ are the densities of $\mu$ and $\nu$.  In general,  this is a \emph{non-local} differential equation for $v(y)$, since the domain of integration $\partial^cv(y)$ is defined using the values of $v$ and $u=v^c$ throughout $\Y$; however, when the model satisfies the generalized nestedness condition   (namely $\partial^cv(y) =X_=(y,Dv(y))$), it reduces to the local equation \cite{mccann2018optimal}:

 \begin{equation}
\label{eqn: local unequal MA}
\bar\nu(y)=\int_{X_=(y,Dv(y))}\dfrac{\det(D_{yy}^2c(x,y)-D^2v(y))}{\sqrt{|\det(D^2_{yx}c D^2_{xy}c)(x,y)|}}\bar\mu(x)d\mathcal H^{m-n}(x).
\end{equation}

\subsection{Multi-to one-dimensional optimal transport}
We consider now the optimal transport problem in the case in which $m>n=1$ (for more details we refer the reader to \cite{PassM2one}).  In this case,  generalized nestedness follows from  a relatively simple condition, related to the following heuristic attempt to construct solutions to \eqref{eqn: ot problem}.
Let us define the level and super-level sets of $D_yc$ as follows 
$$
X_=(y,k) :=\{x\in\X\;:\; \frac{\partial c}{\partial y}(x,y)= k \},
$$
\begin{equation*}
X_\geq (y,k):=\{x\in\X\;:\; \frac{\partial c}{\partial y}(x,y)\geq k \},
\end{equation*}
 as well as the strict variant $X_> (y,k):=X_\geq (y,k)\setminus X_=(y,k)$.
 In order to build an optimal transport map $T$, we take the unique level set splitting the mass proportionately with $y$; that is, defining $k(y)$ such that
 \begin{equation}\label{eqn: def of k}
  \mu(X_\geq(y,k(y)))=\nu((-\infty,y]),    
  \end{equation}
 then we set $y=T(x)$ for all $x$ which belong to $X_=(y,k(y))$.
 Notice that if there exists $x\in\X$ such that $x\in X_=(y_0,k(y_0)) \cap X_=(y_1,k(y_1))$ then the map $T$ is not well-defined.  The absence of such a degenerate case is equivalent to the super-level sets being nested; this is the definition of nestedness from \cite{PassM2one}, which implies the more general Definition \ref{def: high dim nestedness} when $n=1$, as the following result from \cite{mccann2018optimal} affirms.

 
\begin{proposition}[Nestedness for one dimensional targets]
\label{nested}
The model $(c,\mu,\nu)$  satisfies the generalized nestedness condition if 
\begin{equation}\label{eqn: 1-d nestedness}
\forall y_0,y_1\; \text{with} \; y_1>y_0,\; \nu([y_0,y_1])>0\Longrightarrow X_\geq (y_0,k(y_0)) \subseteq X_>(y_1,k(y_1)) .
\end{equation}
\end{proposition}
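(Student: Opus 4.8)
The plan is to derive the generalized nestedness of Definition~\ref{def: high dim nestedness} by feeding the super-level-set nestedness \eqref{eqn: 1-d nestedness} into the level-set-splitting construction of \cite{PassM2one} and then reading off the contact set $\partial^c v$ of the resulting Kantorovich potential. First I recall from \cite{PassM2one} that, under \eqref{eqn: 1-d nestedness}, the threshold $k(y)$ of \eqref{eqn: def of k} is well defined, the sets $X_\geq(y,k(y))$ increase with $y$, the map $T$ determined by $T^{-1}\big((-\infty,y]\big)=X_\geq(y,k(y))$ is $\mu$-a.e.\ single valued and satisfies $T_\#\mu=\nu$, and $(u,v):=(v^c,v)$ is an optimal pair of Kantorovich potentials, where $v$ can be taken of the form $v(y)=\mathrm{const}+\int^{y}k(t)\,dt$; in particular $v$ is locally Lipschitz, is semiconcave (an infimum of the uniformly $C^2$ functions $y\mapsto c(x,y)-u(x)$), and satisfies $v'(y)=k(y)$ at a.e.\ $y$, with $v(y_1)-v(y_0)=\int_{y_0}^{y_1}k(t)\,dt$. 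Being a semiconcave function of one variable, $v$ is differentiable outside an at most countable set, hence $\nu$-a.e.\ (we take $\nu$ atomless, as in \cite{mccann2018optimal}), so the differentiability requirement in \eqref{eqn: high dimensional nestedness} holds. Since the inclusion $\partial^c v(y)\subseteq X_=(y,v'(y))$ is automatic at every point of differentiability of $v$, as recalled after Definition~\ref{def: high dim nestedness}, it remains only to establish the reverse inclusion for $\nu$-a.e.\ $y$.

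Fix $y$ in a set of full $\nu$-measure on which $v$ is differentiable with $v'(y)=k(y)$, and fix $x\in X_=(y,v'(y))=X_=(y,k(y))$; I must show $u(x)+v(y)=c(x,y)$, i.e.\ that $g(y'):=c(x,y')-v(y')$ attains its minimum over $\bar\Y$ at $y'=y$. Using absolute continuity of $g$ and the representation of $v$,
\[
 g(y')-g(y)=\int_{y}^{y'}\Big(\tfrac{\partial c}{\partial y}(x,t)-k(t)\Big)\,dt .
\]
For $t>y$ with $t\in\spt\nu$ one has $\nu([y,t])>0$, so applying \eqref{eqn: 1-d nestedness} to the pair $y<t$ together with $x\in X_=(y,k(y))\subseteq X_\geq(y,k(y))$ gives $x\in X_>(t,k(t))$, i.e.\ $\tfrac{\partial c}{\partial y}(x,t)>k(t)$; symmetrically, for $t<y$ with $t\in\spt\nu$, were $\tfrac{\partial c}{\partial y}(x,t)\geq k(t)$ then \eqref{eqn: 1-d nestedness} applied to $t<y$ would force $x\in X_\geq(t,k(t))\subseteq X_>(y,k(y))$, i.e.\ $\tfrac{\partial c}{\partial y}(x,y)>k(y)$, contradicting $x\in X_=(y,k(y))$, so $\tfrac{\partial c}{\partial y}(x,t)<k(t)$. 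Extending these sign statements across the maximal gaps of $\spt\nu$ (where the structure of $v$ and $k$ from \cite{PassM2one} is used), the integrand is positive for a.e.\ $t\in(y,y')$ when $y'>y$ and negative for a.e.\ $t\in(y',y)$ when $y'<y$, so $g$ is strictly decreasing on $\{y'<y\}\cap\bar\Y$ and strictly increasing on $\{y'>y\}\cap\bar\Y$, whence $g(y')>g(y)$ for every $y'\neq y$. Therefore $u(x)=v^c(x)=\min_{\bar\Y}g=g(y)=c(x,y)-v(y)$, i.e.\ $x\in\partial^c v(y)$. Combined with the automatic inclusion this yields $\partial^c v(y)=X_=(y,v'(y))$ for $\nu$-a.e.\ $y$, which is exactly \eqref{eqn: high dimensional nestedness}.

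The main obstacle I anticipate is controlling $g$ on the maximal gaps of $\spt\nu$: there \eqref{eqn: 1-d nestedness} gives no information directly, and one must invoke the finer description of one-dimensional-target potentials from \cite{PassM2one} — the affine behaviour of $v$ and of the threshold $k$ across such gaps — to conclude that the sign of the integrand $\tfrac{\partial c}{\partial y}(x,t)-k(t)$ persists there. A secondary point is the identification $v'=k$ $\nu$-a.e., i.e.\ that the potential furnished by abstract duality coincides with the one built from the mass-splitting construction; this follows from uniqueness of the Kantorovich potentials together with the optimality of $T$, both from \cite{PassM2one}. Everything else — semiconcavity of $v$ and the ensuing $\nu$-a.e.\ differentiability, absolute continuity of $g$, and the reduction of the global minimization of $g$ to a sign condition on $g'$ — is routine, and indeed much of this is already contained in \cite{PassM2one}.
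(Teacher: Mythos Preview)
The paper does not supply its own proof of Proposition~\ref{nested}; it is quoted as a result from \cite{mccann2018optimal}. So there is nothing to compare against, and the relevant question is simply whether your argument is sound.

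Your argument is correct in substance. The reduction to showing that $g(y')=c(x,y')-v(y')$ is minimized at $y$, via the sign of $\tfrac{\partial c}{\partial y}(x,t)-k(t)$, is exactly the right mechanism, and your use of \eqref{eqn: 1-d nestedness} in both directions is clean. The one place where you make your life harder than necessary is the treatment of gaps of $\spt\nu$. You phrase the sign argument for ``$t\in\spt\nu$'' and then flag the extension across gaps as the main obstacle, appealing to finer structure from \cite{PassM2one}. This detour is avoidable: since you only need \eqref{eqn: high dimensional nestedness} for $\nu$-a.e.\ $y$, you may from the outset restrict to $y$ which are two-sided accumulation points of $\spt\nu$ (the complement in $\spt\nu$ consists of endpoints of the at most countably many gaps, hence is $\nu$-null when $\nu$ is atomless). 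For such $y$ and \emph{every} $t>y$, the interval $(y,t)$ contains points of $\spt\nu$, hence $\nu([y,t])>0$; likewise for $t<y$. Thus the hypothesis of \eqref{eqn: 1-d nestedness} is available for all $t\neq y$, not merely for $t\in\spt\nu$, and your sign computation gives $g'(t)>0$ for a.e.\ $t>y$ and $g'(t)<0$ for a.e.\ $t<y$ directly, with no separate gap analysis. A minor related point: your sentence ``for $t>y$ with $t\in\spt\nu$ one has $\nu([y,t])>0$'' is not quite what you want (it need not hold for arbitrary $y\in\spt\nu$); the correct statement is the one above with $y$ a two-sided accumulation point.
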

 We will say $(c,\mu,\nu)$ is \textit{nested} if \eqref{eqn: 1-d nestedness} is satisfied.

 If the model $(c,\mu,\nu)$ is nested then \cite{PassM2one}[Theorem 4] assures that $\gamma_T=(\id,T)_\sharp \mu$, where the map $T$ is built as above, is the unique minimizer of \eqref{eqn: ot problem} in $\Pi(\mu,\nu)$. Moreover, the optimal potential $v(y)$ is given by $v(y)=\int_{-\infty}^y k(t) dt$, and so \eqref{eqn: local unequal MA} becomes
 
 \begin{equation}
 \label{MAmulti2one}
 \bar\nu(y)=\int_{X_=(y,k(y))}\dfrac{D^2_{yy}c(x,y)-k'(y)}{|D^2_{xy}c(x,y)|}\bar\mu(x)d\mathcal H^{m-1}(x),
 \end{equation}
 where $|\cdot|$ denotes the standard Euclidean norm.
 \subsection{A sufficient condition for nestedness}\label{sect: bounds imply nestedness}
 The nestedness condition \eqref{eqn: 1-d nestedness} and its higher dimensional generalization in Definition \ref{def: high dim nestedness} depend on all the data, $(c, \mu, \nu)$, of the optimal transport problem \eqref{eqn: ot problem}. 
In the following we give sufficient conditions in the $m >n=1$ setting which let us establish nestedness when, instead of knowing  $\nu$ (respectively $\mu$), we know only bounds on its density.  This will be useful in subsequent sections when we study variational problems of form \eqref{eqn: variational problem}, as for appropriate choices of $\mathcal{F}$ and $\mathcal{G}$, minimizers have upper and/or lower bounds.  Our approach here is somewhat reminiscent of the approach in \cite{Pass2012}, where specific local comparisons (depending precisely on the marginals) between the masses of particular sets were used to, essentially, ensure nestedness.  However, sufficient conditions for nestedness without complete knowledge of the marginals were not formulated in \cite{Pass2012}.

 Fix $y_0 <y_1$ (where $y_0,y_1\in\Y$), $k_0 \in D_yc(\X,y_0)$ and set $k_{max}(y_0,y_1,k_0)=\sup\{k: X_\geq(y_0,k_0) \subseteq X_\geq (y_1,k)\}$.  We then define the \emph{minimal mass difference}, $D^{min}_\mu$, as follows: 
 
 $$
D^{min}_\mu(y_0,y_1,k_0) =\mu(X_\geq(y_1,k_{max}(y_0,y_1,k_0))\setminus X_\geq(y_0,k_0)).
 $$
 The minimal mass difference represents the smallest amount of mass that can lie between $y_0$ and $y_1$, and still have the corresponding level curves $X_=(y_0,k_0)$ and $X_=(y_1,k_1)$ not intersect.
 In the following we assume that $d\mu(x) =\bar \mu (x)dx$ and $d\nu(y) =\bar \nu (y)dy$ are absolutely continuous with respect to the Lebesgue measure.
 \begin{theorem}\label{lemma: nestedness condition}
 	Assume that $\mu$ and $\nu$ are absolutely continuous with respect to Lebesgue measure.     If $D^{min}_\mu(y_0,y_1,k(y_0)) < \nu([y_0,y_1])$ for all $y_0<y_1$ where $k(y)$ is defined by \eqref{eqn: def of k}, then $(c,\mu,\nu)$ is nested.  Conversely, if $(c,\mu,\nu)$ is nested, we must have $D^{min}_\mu(y_0,y_1,k(y_0)) \leq \nu([y_0,y_1])$ for all $y_1 > y_0$.
 \end{theorem}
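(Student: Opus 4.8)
The plan is to reduce both implications to a single elementary mass identity, and then read off (non\nobreakdash-)nestedness from the monotonicity of $k\mapsto\mu(X_\geq(y_1,k))$. Write $A(y):=X_\geq(y,k(y))$ and $k_{max}:=k_{max}(y_0,y_1,k(y_0))$. Unwinding the definition of $k_{max}$, one sees $k_{max}=\inf\{\frac{\partial c}{\partial y}(x,y_1):x\in A(y_0)\}$; in particular, because $X_\geq$ is defined with a \emph{non-strict} inequality, the containment $A(y_0)\subseteq X_\geq(y_1,k_{max})$ holds, so $X_\geq(y_1,k_{max})=A(y_0)\sqcup\big(X_\geq(y_1,k_{max})\setminus A(y_0)\big)$ is a disjoint union. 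Applying $\mu$, using the defining relation $\mu(A(y))=\nu((-\infty,y])$ from \eqref{eqn: def of k} and the absence of a $\nu$-atom at $y_0$ (absolute continuity), gives
\begin{equation*}
\mu\big(X_\geq(y_1,k_{max})\big)-\mu\big(X_\geq(y_1,k(y_1))\big)=D^{min}_\mu(y_0,y_1,k(y_0))-\nu([y_0,y_1]).
\end{equation*}
Throughout I use that $k(y)$ is genuinely well-defined: non-degeneracy of $c$ makes $\frac{\partial c}{\partial y}(\cdot,y)$ a submersion on $\X$, so each level set $X_=(y,k)$ is an $(m-1)$-dimensional $C^1$ hypersurface and hence $\mu$-null (as $\mu\ll\mathcal L^m$); thus $k\mapsto\mu(X_\geq(y,k))$ is continuous and non-increasing and the equation $\mu(X_\geq(y,k(y)))=\nu((-\infty,y])$ is solvable.

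\textbf{Sufficiency.} Suppose $D^{min}_\mu(y_0,y_1,k(y_0))<\nu([y_0,y_1])$ for all $y_0<y_1$. By the identity the left-hand side above is strictly negative, so by monotonicity $k_{max}>k(y_1)$. Hence every $x\in A(y_0)$ satisfies $\frac{\partial c}{\partial y}(x,y_1)\geq k_{max}>k(y_1)$, i.e.\ $x\in X_>(y_1,k(y_1))$, giving $X_\geq(y_0,k(y_0))\subseteq X_>(y_1,k(y_1))$ for every $y_0<y_1$. (Since $D^{min}_\mu\geq0$, the hypothesis moreover forces $\nu([y_0,y_1])>0$ for all $y_0<y_1$, so this is exactly what is required.) Thus $(c,\mu,\nu)$ is nested by Proposition \ref{nested}.

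\textbf{Necessity.} Suppose $(c,\mu,\nu)$ is nested and fix $y_1>y_0$. If $\nu([y_0,y_1])>0$, nestedness gives $A(y_0)\subseteq X_>(y_1,k(y_1))\subseteq X_\geq(y_1,k(y_1))$, so $k(y_1)$ belongs to the set whose supremum is $k_{max}$, whence $k(y_1)\leq k_{max}$; monotonicity then makes the left-hand side of the identity $\leq0$, i.e.\ $D^{min}_\mu(y_0,y_1,k(y_0))\leq\nu([y_0,y_1])$. If $\nu([y_0,y_1])=0$ the claim is $D^{min}_\mu(y_0,y_1,k(y_0))=0$; here nestedness gives no direct information, and one argues by approximation, using continuity of $y\mapsto k(y)$ (again from $\mu$-negligibility of level sets): passing to the limit $y'\downarrow y_1$ in the inclusions $A(y_0)\subseteq A(y')$ available from nestedness for those $y'>y_1$ with $\nu([y_0,y'])>0$ (and treating the degenerate case $\nu((-\infty,y_0])=1$ separately, where $A(y_0)$ and $A(y_1)$ are both $\mu$-conull) yields $A(y_0)=A(y_1)$ up to $\mu$-null sets, hence $k_{max}=k(y_1)$ and $D^{min}_\mu(y_0,y_1,k(y_0))=0$.

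\textbf{Main obstacle.} Once the identity in the first paragraph is available the mechanism is short; the points needing care are purely measure-theoretic — that $k(\cdot)$ is well-defined and continuous, and that inclusions between super-level sets pass without loss to inequalities between their $\mu$-masses — all of which rest on $\mu$-negligibility of the level sets $X_=(y,k)$, which is precisely what the non-degeneracy hypothesis on $c$ buys. The genuinely fiddly piece is the necessity statement across a gap of $\nu$ (the case $\nu([y_0,y_1])=0$), which is why a separate limiting step is required there rather than a one-line deduction from the identity.
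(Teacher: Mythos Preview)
Your argument is correct and follows essentially the same route as the paper's proof. The paper argues sufficiency by contrapositive (if nestedness fails then $k(y_1)\geq k_{max}$, forcing $\nu([y_0,y_1])\leq D^{min}_\mu$) and necessity directly (nestedness gives $k(y_1)\leq k_{max}$, hence $D^{min}_\mu\leq \nu([y_0,y_1])$); your displayed identity $\mu(X_\geq(y_1,k_{max}))-\mu(X_\geq(y_1,k(y_1)))=D^{min}_\mu-\nu([y_0,y_1])$ simply packages these two computations into one line, after which comparing $k(y_1)$ with $k_{max}$ via monotonicity of $k\mapsto\mu(X_\geq(y_1,k))$ is exactly what the paper does.

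One point worth noting: you are more careful than the paper in the necessity direction when $\nu([y_0,y_1])=0$. The paper's proof silently uses the inclusion $X_\geq(y_0,k(y_0))\subseteq X_\geq(y_1,k(y_1))$ for \emph{all} $y_0<y_1$, whereas the nestedness definition \eqref{eqn: 1-d nestedness} only supplies it when $\nu([y_0,y_1])>0$. You flag this and sketch a limiting argument. That sketch is not fully airtight as written (continuity of $y\mapsto k(y)$ is asserted rather than proved, and the step ``$A(y_0)=A(y_1)$ up to $\mu$-null sets, hence $k_{max}=k(y_1)$'' conflates measure equality with the genuine set inclusion defining $k_{max}$), but the paper does not address this case at all, so your treatment is at least as complete. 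In the applications the paper has in mind (Corollary~\ref{cor: sufficient conditions for nestedness} onward) one has $\bar\nu>0$ throughout $Y$, so the issue does not arise.
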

 \begin{proof}
 	This essentially follows from the definition of nestedness \eqref{eqn: 1-d nestedness}; if nestedness fails, we have that $X_\geq(y_0,k_0)$ is not contained in $X_>(y_1, k_1)$ for some $y_0 <y_1$, with $k_i =k(y_i)$.  Therefore, $k_1 \geq k_{max}(y_0,y_1,k_0)$, and so
 	$$
 	\mu(X_\geq(y_1,k_1)\setminus X_\geq(y_0,k_0)) \leq D ^{min}_\mu(y_0,y_1,k_0).
 	$$
 	Now, by definition of the $k_i$, $\nu([y_0,y_1]) = \mu(X_\geq(y_1,k_1)) -\mu(X_\geq(y_0,k_0)) \leq 	\mu(X_\geq(y_1,k_1)\setminus X_\geq(y_0,k_0)) \leq D^{min}_\mu(y_0,y_1,k_0)$, which contradicts the assumption in the Lemma.
 	
 	On the other hand, if $(c,\mu,\nu)$ is nested, then for all $y_1>y_0$,
 	$$
 	\mu(X_\geq(y_1,k_1) \setminus X_\geq(y_0,k_0)) =\mu(X_\geq(y_1,k_1)) -\mu( X_\geq(y_0,k_0)) =\nu([y_0,y_1]),
 	$$
with $k_i=k(y_i)$ as before. 	Since nestedness also implies $k_1 \leq k_{max}(y_0,y_1,k_0)$, we have 
 	$$
 	D^{min}_\mu(y_0,y_1,k_0) \leq \mu(X_\geq(y_1,k_1) \setminus X_\geq(y_0,k_0)) =\nu([y_0,y_1]),
 	$$
 	completing the proof.
 \end{proof}	
 
 The following sufficient condition is more convenient when one has bounds on the density $\bar \nu (y)$, as will be the case in the next section.
 \begin{corollary}\label{cor: sufficient conditions for nestedness}
 	If for each $y_0 \in \Y$, we have
 	$$
 	\sup_{y_1 \in \Y,y_0\leq y\leq y_1}\left[\frac{D^{min}_\mu(y_0,y_1,k(y_0))}{y_1-y_0}-\bar\nu(y)\right] < 0,
 	$$
 	where $\bar\nu =\frac{d\nu(y)}{dy}$, then $(c,\mu,\nu)$ is nested.	
 	
 \end{corollary}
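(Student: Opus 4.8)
The plan is to reduce the statement immediately to the sufficient condition for nestedness proved in Theorem \ref{lemma: nestedness condition}: it is enough to show that the displayed hypothesis on $\bar\nu$ forces
$$
D^{min}_\mu(y_0,y_1,k(y_0)) < \nu([y_0,y_1]) \qquad \text{for all } y_0<y_1 \text{ in } \Y,
$$
where $k(\cdot)$ is defined by \eqref{eqn: def of k}. So the entire argument consists of converting a pointwise bound on $\bar\nu$ into a bound on the mass $\nu([y_0,y_1])$.

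First I would fix $y_0\in\Y$ and, using that the supremum in the hypothesis is strictly negative, write it as $-\delta(y_0)$ with $\delta(y_0)>0$. Unwinding the definition of the supremum, this means that for every $y_1>y_0$ and every $y$ with $y_0\le y\le y_1$ one has
$$
\bar\nu(y) \ \ge\ \frac{D^{min}_\mu(y_0,y_1,k(y_0))}{y_1-y_0} + \delta(y_0).
$$
The key observation is that $D^{min}_\mu(y_0,y_1,k(y_0))$ does not depend on the intermediate point $y$, so the right-hand side is constant in $y\in[y_0,y_1]$. Integrating this inequality in $y$ over $[y_0,y_1]$ and using that $\nu$ is absolutely continuous with density $\bar\nu$ gives
$$
\nu([y_0,y_1]) \ \ge\ D^{min}_\mu(y_0,y_1,k(y_0)) + \delta(y_0)(y_1-y_0) \ >\ D^{min}_\mu(y_0,y_1,k(y_0)),
$$
the last inequality being strict since $\delta(y_0)>0$ and $y_1>y_0$. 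Since $y_0<y_1$ were arbitrary, the hypothesis of Theorem \ref{lemma: nestedness condition} holds, and therefore $(c,\mu,\nu)$ is nested.

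There is no real obstacle in this argument; the only point worth a moment's care is the passage from a pointwise inequality to a \emph{strict} inequality between integrals, which is why I would extract the uniform gap $\delta(y_0)$ from the supremum rather than argue measure-theoretically. (One could instead retain the strict pointwise bound $\bar\nu(y) > D^{min}_\mu(y_0,y_1,k(y_0))/(y_1-y_0)$ and invoke the fact that a strictly positive integrand has strictly positive integral, but working with $\delta(y_0)$ sidesteps even this.)
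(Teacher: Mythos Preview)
Your argument is correct and follows exactly the paper's approach: read the hypothesis as a pointwise lower bound on $\bar\nu$, integrate over $[y_0,y_1]$, and invoke Theorem~\ref{lemma: nestedness condition}. Your extraction of the uniform gap $\delta(y_0)$ is a slightly more explicit way of handling the strict inequality than the paper's one-line ``integrating \ldots\ yields the desired result,'' but the substance is identical.
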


 \begin{proof}
 	The condition means that for each $y \in [y_0,y_1]$ we have
 	$$
 	\frac{D^{min}_\mu(y_0,y_1,k(y_0))}{y_1-y_0} < \bar\nu(y).
 	$$
 	Integrating $y$ from $y_0$ to $y_1$ and applying Theorem \ref{lemma: nestedness condition} yields the desired result.
 \end{proof}	
 As a consequence, if the quantity $\frac{D^{min}_\mu(y_0,y_1,k_0)}{y_1-y_0}$ is bounded above for all $y_0<y_1$ and $k_0 \in D_yc(X,y_0)$, then a corresponding lower bound on $\bar\nu$ will ensure nestedness.  We illustrate this with an example.  
 \begin{example}	\label{ex: disk to arc}
 	Letting $\mu$ be uniform measure on the quarter disk, $\X:=\{x_1,x_2>0: x_1^2+x_2^2 < 1 \} \subseteq \mathbb{R}^2$, so that $d\mu(x) =\frac{4}{\pi}dx$, $\Y=(0,\bar y)$, with $\bar y \leq \frac{\pi}{2}$, parametrize a segment of the unit circle, and $c(x,y) =-(x_1\cos(y) +x_2\sin(y))$ the bilinear cost, we note that the level curves $X_=(y,k)$ are line segments parallel to the line segment joining $(0,0)$ with $(\cos(y),\sin(y))$.  
 	
 	Now, for fixed $y_0<y_1,k_0$,   the lines $X_=(y_0,k_0)$ and $X_=(y_1,k_{max}(y_0,y_1,k_0))$ intersect on the $x_2$ axis, if $k_0<0$ and the $x_1$ axis if $k_0>0$. In either case, the region
 	$X_\geq (y_1,k_{max}(y_0,y_1,k_0)) \setminus X_\geq(y_0,k_0)$ is the part of the wedge of angle $y_1-y_0$ between the two lines which intersects $\X$;  this wedge is smaller than the corresponding wedge $X_\geq (y_1,0) \setminus X_\geq(y_0,0)$, for which the intersection point is at the origin.  Therefore,
 	
 	$$
 	D^{min}_\mu(y_0,y_1,k_0)  \leq \mu (X_\geq (y_1,0) \setminus X_\geq(y_0,0)) =(y_1-y_0) \frac{2}{\pi}
 	$$
 	It therefore follows from Corollary \ref{cor: sufficient conditions for nestedness} that the model $(c,\mu,\nu)$ is nested for any $d\nu=\bar\nu(y)dy$ such that 
 	\begin{equation}\label{eqn: sufficient lower bound}
 	\bar\nu(y) >  \frac{2}{\pi}.
 	\end{equation} 
 \end{example}
\begin{remark}
Suppose that $c$ is of pseudo-index form; that is, $c(x,y) = C(I(x), y) +B(x)$, where $I: \mathbb{R}^m \rightarrow \mathbb{R}$, $C:\mathbb{R} \times \mathbb{R} \rightarrow \mathbb{R}$ and $B: \mathbb{R}^m \rightarrow \mathbb{R}$ are smooth, $DI \neq 0$ and $\frac{\partial ^2 C}{\partial I\partial y} <0$.    In this case, $k_{max}(y_0,y_1,k_0)=D_yC(I,y_1)$, where $I$ is such that $D_yC(I,y_0) =k_0$, and so $X_\geq(y_0,k_0) =X_\geq(y_1,k_{max}(y_0,y_1,k_0))$ and $D^{min}_\mu(y_0,y_1,k_0) =0$ for any a.c $\mu$.  
	Therefore, by Theorem \ref{lemma: nestedness condition},  we recover the following fact from \cite{PassM2one}:  $(c,\mu,\nu)$ is nested for any choices of marginals $\mu$ and $\nu$, provided $\nu$ charges every interval.
\end{remark}

 When considering problems where the measure $\nu$ on $\mathbb{R}$ is fixed but the high dimensional marginal $\mu$ is not, the following reformulation is sometimes convenient; it implies that an appropriate upper bound on $\bar \mu$ yields nestedness.
 \begin{corollary}\label{cor: nestedness from bounds on mu}
 	Suppose that for all $y_0 \in \Y$ and ,
 	$$
	\sup_{y_1 \in \Y,y_0\leq y\leq y_1} [||\bar\mu||_{L^{\infty}(X_\geq(y_1,k_{max}(y_0,y_1,k(y_0)))\setminus X_\geq(y_0,k(y_0)))} \frac{D^{\min}_{vol}(y_0,y_1,k(y_0))}{y_1-y_0}-\bar\nu(y)] < 0.
 	$$
 	Then $(c,\mu,\nu)$ is nested.
 \end{corollary}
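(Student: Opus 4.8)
The plan is to deduce this from Corollary~\ref{cor: sufficient conditions for nestedness} by comparing the minimal mass difference $D^{\min}_\mu$ with its Lebesgue-measure counterpart $D^{\min}_{vol}$, obtained by replacing $\mu$ with Lebesgue measure in the definition of $D^{\min}_\mu$, i.e.
$$
D^{\min}_{vol}(y_0,y_1,k_0) = \mathrm{Leb}\big(X_\geq(y_1,k_{max}(y_0,y_1,k_0))\setminus X_\geq(y_0,k_0)\big).
$$
Note that $D^{\min}_{vol}$ depends only on the cost $c$, not on $\mu$ or $\nu$.

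The single ingredient needed is the elementary estimate: since $d\mu = \bar\mu\,dx$, for any measurable set $E$ one has $\mu(E) = \int_E \bar\mu\,dx \le ||\bar\mu||_{L^\infty(E)}\,\mathrm{Leb}(E)$. I would apply this with $E = X_\geq(y_1,k_{max}(y_0,y_1,k(y_0)))\setminus X_\geq(y_0,k(y_0))$, which is measurable (indeed a difference of closed sets, by continuity of $D_yc$), to obtain, for all $y_0 < y_1$,
$$
D^{\min}_\mu(y_0,y_1,k(y_0)) \;\le\; ||\bar\mu||_{L^\infty(E)}\; D^{\min}_{vol}(y_0,y_1,k(y_0)).
$$
Dividing by $y_1-y_0$ and subtracting $\bar\nu(y)$ for $y \in [y_0,y_1]$, then taking the supremum over $y_1 \in \Y$ and $y_0 \le y \le y_1$, the right-hand side becomes exactly the quantity assumed strictly negative in the hypothesis; hence so is $\sup_{y_1,\,y_0\le y\le y_1}\big[D^{\min}_\mu(y_0,y_1,k(y_0))/(y_1-y_0) - \bar\nu(y)\big]$. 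This is precisely the hypothesis of Corollary~\ref{cor: sufficient conditions for nestedness}, which then gives nestedness of $(c,\mu,\nu)$.

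There is no real obstacle here: the only point worth stating carefully is the decoupling that this estimate achieves — bounding $\bar\mu$ by its $L^\infty$-norm over the thin ``wedge'' $E$ separates the purely geometric quantity $D^{\min}_{vol}$ (a function of $c$ alone) from the size information $||\bar\mu||_{L^\infty}$, so the criterion can be checked from an upper bound on $\bar\mu$ without knowing $\mu$ itself, which is exactly what is needed in Section~3 where $\mu$ is free but admits such a bound.
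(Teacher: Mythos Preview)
Your argument is correct and is essentially identical to the paper's proof: the paper likewise bounds $D^{\min}_\mu(y_0,y_1,k(y_0)) \le \|\bar\mu\|_{L^\infty(E)}\, D^{\min}_{vol}(y_0,y_1,k(y_0))$ via the elementary $\mu(E)\le\|\bar\mu\|_{L^\infty(E)}\mathrm{Leb}(E)$ and then invokes Corollary~\ref{cor: sufficient conditions for nestedness}. Your write-up is a bit more detailed, but the route is the same.
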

\begin{proof}
First of all notice that the following holds
$$D^{min}_\mu(y_0,y_1,k(y_0))  \leq ||\bar\mu||_{L^{\infty}(X_\geq(y_1,k_{max}(y_0,y_1,k(y_0)))\setminus X_\geq(y_0,k(y_0)))}D^{min}_{vol}(y_0,y_1,k(y_0)).$$
Then by Corollary \ref{cor: sufficient conditions for nestedness} we can conclude.
\end{proof}

 \section{Some variational problems}
We now turn our focus to minimizing functionals of the form \eqref{eqn: variational problem}, and the subproblems obtained when one of the measures is fixed: 
\begin{equation}
\label{minPbNu}
\min \{\mathcal T_c(\mu, \nu) +\mathcal{F}(\nu)\;:\;\nu\in\PP(\bar \Y) \},
\end{equation}
and
    \begin{equation}
\label{minPbMu}
\min \{\mathcal T_c(\mu, \nu) +\mathcal{G}(\mu)\;:\;\mu\in\PP(\bar \X) \}.
\end{equation}
    
    The approach we take depends strongly on the form of the functionals; when $n=1$, and $\mathcal{F}$ (respectively $\mathcal{G}$) has a congestion form, we can derive upper and lower bounds on solutions $\nu$ (respectively $\mu$) to \eqref{minPbNu} (respectively \eqref{minPbMu}), which yield nestedness by the results in the last section.  These bounds are established in subsection \ref{subsect: density bounds} below, and applied in subsections \ref{lbwi} and \ref{subsect: min high dim}.   Notice that these bounds on the unknown densities actually do not depend on the dimension of $\X$ and $\Y$. The dimension plays a  role only when we combine these bounds with the results of the previous section, which require one dimensional targets. For functionals with different forms, such bounds are not available; in these cases, one can sometimes derive nestedness in \eqref{minPbNu} directly from the optimality conditions, even for higher dimensional targets.  We follow this approach in subsection \ref{subsect: interaction and potential}. 
    \subsection{Bounds on densities of solutions}\label{subsect: density bounds}
 
  Let us consider \eqref{minPbNu} 
  where $\mathcal{F}$ is of \emph{congestion}, or internal energy, form: $\mathcal{F}(\nu):=\int_{\Y} f(\bar \nu(y))dy$
    with $f:[0,\infty) \rightarrow \mathbb{R}$  continuously differentiable on $(0,\infty)$, strictly convex with superlinear growth at infinity, satisfying,
 $$
 \lim_{\bar \nu \rightarrow 0^+}f'(\bar \nu) =-\infty.
 $$  
A prototypical example is  the entropy, $f(\bar \nu) =\bar \nu\ln(\bar \nu)$. This is a popular type of functional in a variety of settings; in the Cournot-Nash case, it reflects agents' desires to choose strategies that are not too close to each other.\footnote{Blanchet-Carlier motivate congestion functionals via a holiday choice example, where $y \in Y$ parametrizes a location where agent might go for vacation \cite{abgcmor}.  In the present context, interpreting $x \in \X \subseteq \R^2$ as the home location of agent $x$, and taking $c(x,y) =|x-y|^2$, a curve $Y \subseteq \mathbb{R}^2$ might parameterise a one dimensional continuum of desirable or easily accessible holiday locations (the coast of an ocean or lake, or locations along a major railroad or highway, for example).  Agents therefore try to minimize the distance from their holiday location to their homes, as well as the density of other agents vacationing in the same place.}

Notice that the assumptions on $\mathcal F$ guarantees the existence of a minimizer of \eqref{minPbNu}.

In the proposition below, we let  $M_c :=\sup_{(x,y_0,y_1) \in (\bar \X \times \bar \Y \times \bar Y)}\frac{|c(x,y_0) -c(x,y_1)|}{|(x,y_0) -(x,y_1)|}$, where $|\cdot|$ denotes the Euclidean norm, be a global Lipschitz constant for $y\mapsto c(x,y)$ and for a bounded real valued function $v:\Y \rightarrow \mathbb{R}$, denote by $K_v:(0,\infty)\rightarrow (-\infty, \infty)$ the inverse of the  monotone increasing function $z \mapsto \int_Y (f')^{-1}(z-v(y))dy: (-\infty, \infty) \rightarrow (0, \infty)$.

For simplicity, we assume below that $ 0 \in \bar \Y$.  The argument below is inspired by \cite[Section 7.4.1]{santambook}.

\begin{proposition}\label{prop: upper bound on density from conjestion}
	The minimizing $\nu$ in \eqref{minPbNu} is absolutely continuous with respect to Lebesgue, and its density $\bar \nu$ satisfies 
	$$
	(f')^{-1}(K_{-M_c|y|}(1)-M_c|y|) \leq \bar \nu(y) \leq (f')^{-1}(K_{M_c|y|}(1) + M_c|y|)
	$$
\end{proposition}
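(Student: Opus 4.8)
The plan is to reduce the statement to a pointwise identity relating the optimal density to a Kantorovich potential, and then to bound that potential by a linear function and push the bound through. Note first that the hypotheses on $f$ (strict convexity, $f'(0^+)=-\infty$, and superlinear growth, the last of which forces $f'(+\infty)=+\infty$) make $f'\colon(0,\infty)\to\R$ an increasing bijection; hence $(f')^{-1}\colon\R\to(0,\infty)$ is an increasing bijection, and for any bounded $w$ the function $z\mapsto\int_\Y(f')^{-1}(z-w(y))\,dy$ is an increasing bijection of $\R$ onto $(0,\infty)$, so its inverse $K_w$ is well defined as in the statement.

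\emph{Step 1: absolute continuity and the Euler--Lagrange equation.} With the standard convention for internal energies with superlinearly growing $f$, namely that $\mathcal F(\nu)=+\infty$ unless $\nu$ is absolutely continuous, the uniform measure on $\bar\Y$ is an admissible competitor with $\mathcal T_c(\mu,\cdot)$ finite (as $c$ is bounded) and $\mathcal F$ finite; hence the minimizer $\nu$ in \eqref{minPbNu} is absolutely continuous, $d\nu=\bar\nu(y)\,dy$ (and unique, by strict convexity of $f$). I would then perturb along $\nu_t=(1-t)\nu+t\nu_1$, $t\in[0,1]$, for a competitor $\nu_1$ with density bounded away from $0$. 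Since $\lim_{s\to0^+}f'(s)=-\infty$, the right derivative at $t=0$ of $t\mapsto\mathcal F(\nu_t)$ is $-\infty$ as soon as $\{\bar\nu=0\}$ has positive Lebesgue measure, whereas the right derivative of $t\mapsto\mathcal T_c(\mu,\nu_t)$ is bounded below by $\int_\Y v\,(\bar\nu_1-\bar\nu)\,dy>-\infty$ via the subgradient inequality, with $v$ a Kantorovich potential (bounded on $\bar\Y$); combined with optimality this forces $\bar\nu>0$ a.e. The first--order optimality condition for \eqref{minPbNu}, computed as in \cite[Section~7.4.1]{santambook} --- using that the $c$-concave Kantorovich potential $v$ on the $\Y$-side is the first variation of $\nu\mapsto\mathcal T_c(\mu,\nu)$, which is single valued under the mild conditions recalled in \S\ref{recall} --- then produces a constant $\lambda\in\R$ with
\begin{equation*}
f'(\bar\nu(y))+v(y)=\lambda\quad\text{for a.e. }y,\qquad\text{equivalently}\qquad \bar\nu(y)=(f')^{-1}\bigl(\lambda-v(y)\bigr),
\end{equation*}
where the mass constraint $\int_\Y(f')^{-1}(\lambda-v(y))\,dy=1$ fixes $\lambda$; that is, $\lambda=K_v(1)$.

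\emph{Step 2: from a Lipschitz bound on $v$ to the density bounds.} Since $v(y)=\min_{x\in\X}\bigl(c(x,y)-u(x)\bigr)$ is an infimum of functions that are $M_c$-Lipschitz in $y$, $v$ is $M_c$-Lipschitz on $\bar\Y$. Replacing $(v,\lambda)$ by $(v+C,\lambda+C)$ changes neither $\bar\nu$ nor the identity $\lambda=K_v(1)$, so I may normalize $v(0)=0$, which gives $-M_c|y|\le v(y)\le M_c|y|$ on $\bar\Y$ since $0\in\bar\Y$. Inserting $v\le M_c|\cdot|$ into $1=\int_\Y(f')^{-1}(\lambda-v(y))\,dy$ and using monotonicity of $(f')^{-1}$ yields $\int_\Y(f')^{-1}(\lambda-M_c|y|)\,dy\le1$, hence $\lambda\le K_{M_c|y|}(1)$; symmetrically, $v\ge-M_c|\cdot|$ yields $\lambda\ge K_{-M_c|y|}(1)$. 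Substituting into $\bar\nu(y)=(f')^{-1}(\lambda-v(y))$ and bounding $v(y)$ once more by $\pm M_c|y|$ gives
\begin{equation*}
(f')^{-1}\bigl(K_{-M_c|y|}(1)-M_c|y|\bigr)\ \le\ \bar\nu(y)\ \le\ (f')^{-1}\bigl(K_{M_c|y|}(1)+M_c|y|\bigr),
\end{equation*}
as claimed.

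\emph{Anticipated main difficulty.} Step 2 and the monotonicity bookkeeping are routine. The delicate point is Step 1: justifying rigorously that the first variation of $\nu\mapsto\mathcal T_c(\mu,\nu)$ at the minimizer is the $c$-concave Kantorovich potential (cleanest when the dual solution is unique), and combining this with the singular slope $f'(0^+)=-\infty$ of $\mathcal F$ to obtain simultaneously that $\bar\nu>0$ a.e. and the pointwise Euler--Lagrange equation. This is classical --- it is essentially the content of \cite[Section~7.4.1]{santambook} in the quadratic, equal--dimensional setting --- and the unequal dimensions cause no new trouble here, since the argument uses only boundedness and Lipschitz continuity of the $\Y$-side potential, properties of $c$-concave functions that persist when $m>n$.
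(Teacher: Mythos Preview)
Your proposal is correct and follows essentially the same route as the paper: derive the Euler--Lagrange identity $\bar\nu(y)=(f')^{-1}(C-v(y))$ with $C=K_v(1)$, normalize $v(0)=0$, use the $M_c$-Lipschitz bound on $v$ to sandwich $v$ between $\pm M_c|y|$, and push these bounds through the monotone maps $(f')^{-1}$ and $K_{(\cdot)}$. The only cosmetic differences are that you sketch directly the perturbation argument forcing $\bar\nu>0$ (the paper cites \cite{abgcmor}) and you spell out both directions of the monotonicity bookkeeping, whereas the paper writes out the lower bound and states that the upper bound is analogous.
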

\begin{proof}
Let $\nu$ be a solution to \eqref{minPbNu}. The absolute continuity follows immediately from the form of $\mathcal F$ (by convention, $\mathcal F[\nu] = \infty$ for non a.c. $\nu$).  It was shown in \cite{abgcmor} that $\bar \nu >0$ throughout $\Y$, and the following equation  holds $\nu-$a.e
 
 $$
 v(y) +f'(\bar \nu)=C,
 $$
for some constant $C$, where $v(y)$ is the Kantorovich potential for transport between $\nu$ and $\mu$.   Without loss of generality we take $v(0) =0$.  We then have
 
 $$
 \bar \nu(y) =(f')^{-1}(C-v(y)).
 $$
 As $\nu$ is a probability measure, we have
 $$
 1= \int_{\Y}\bar \nu(y)dy = \int_Y (f')^{-1}(C-v(y))dy
 $$
 As $f$ is strictly convex, the right hand side of this equation is strictly monotone in $C$ and therefore the equation determines $C =K_v(1)$ uniquely.
 
 Now, it is well known that, as a Kantorovich potential for the cost $c$, $v$ is 
 Lipschitz, with constant $M_c$ \cite{McCann2001}, and so $-M_c  |y| \leq v(y) \leq M_c |y|$ (recall that $v(0)=0$).  This produces a lower bound on $C$ via:
 $$
 1 \leq  \int_Y (f')^{-1}(C+M_c|y|)dy
 $$
 or
 $$
 C\geq K_{-M_c|y|}(1).
 $$
 It then follows that 
 $$
 \bar \nu(y) =(f')^{-1}(C-v(y)) \geq  (f')^{-1}(K_{-M_c|y|}(1)-M_c|y|).
 $$
 A very similar argument yields the upper bound.
 \end{proof}

We remark that the bounds on the density $\bar \nu$ we have established above do not depend on the dimensions of $\X$ and $\Y$.  The lower bound is most relevant in \eqref{minPbNu} (since it is the lower bound on $\bar \nu$ that implies nestedness in the multi-to one-dimensional optimal transport problem, via Corollary \ref{cor: sufficient conditions for nestedness}); however, since the dimensions play no role, the result also applies to minimizers $\mu$ in problem \eqref{minPbMu}, in which case the upper bound can be used to prove nestedness via Corollary \ref{cor: nestedness from bounds on mu} (again with $n=1$).

Before developing these applications, we illustrate how the result above can be used to find an explicit bound in an example.

 \begin{example}\label{ex: required lower bound} Recall the quarter disk to arc problem from Example \ref{ex: disk to arc}: 
 $\mu$ is uniform on 
 $\X:=\{x_1,x_0>0: x_1^2+x_2^2 < 1 \} \subseteq \mathbb{R}^2$, so that $d\mu(x) =\frac{4}{\pi}dx$, $\Y=(0,\bar y)$, and the cost $c(x,y) = -x \cdot (\cos y ,\sin y) =-x_1\cos(y) -x_2\sin(y)$. We take $\mathcal{F}(\nu) =\int_Y\bar \nu \ln(\bar \nu)dy$ so that up to a non-vital constant $f'(\lambda) =\ln(\lambda)$.  We get that $K_v$ is the inverse of $z\mapsto \int_0^{\bar y} \exp{(z-v(y))}dy = \exp{(z)}\int_0^{\bar y}\exp{(-v(y))}dy$; that is, $K_{v}(1) =\ln\Big([\int_0^{\bar y}\exp{(-v(y))}dy]^{-1}\Big)$.  Noting that $M_c=1$, we have, for any minimizer $\nu$ of \eqref{minPbNu},
 \begin{equation}\label{eqn: lower bound on CN density}
 \begin{split}
\bar \nu(y) &\geq \exp\Bigg(\ln\Big([\int_0^{\bar y}\exp{(y)}dy]^{-1}\Big) -y\Bigg)\\ 
&=\frac{e^{- y }}{e^{\bar y} -1}.
\end{split}
\end{equation}

 \end{example}

\begin{remark}\label{rem: sharpness of bound}
	The bounds in Proposition \ref{prop: upper bound on density from conjestion}, and consequently Example \ref{ex: required lower bound}, are not sharp, since the inequality $-M_c  |y| \leq v(y) \leq M_c |y|$ arising from the Lipschitz constraint on $v$ is not sharp.  
	
	We illustrate this by considering the extreme case $\bar y =\frac{\pi}{2}$ in Example \ref{ex: required lower bound}.  In this case, uniform measure $\bar \nu (y) =\frac{2}{\pi}$ minimizes both the entropy $\mathcal{F}(\nu)=\int_Y\bar \nu \ln(\bar \nu)dy$ and the optimal transport $\nu\mapsto\mathcal{T}_c(\mu,\nu)$ distance to  $\mu$; it therefore minimizes \eqref{minPbNu}.  The lower bound provided by \eqref{eqn: lower bound on CN density} ranges from $\frac{e^{-\pi/2}}{e^{\pi/2} -1} \approx 0.055$ (for $y=\frac{\pi}{2}$) to  $\frac{1}{e^{\pi/2} -1} \approx 0.262$ (for $y=0$).
\end{remark}
\subsection{Minimizing congestion with a one dimensional target}\label{lbwi}
We now turn our attention to proving that minimizers in \eqref{minPbNu} are nested.  We begin by considering one dimensional targets and congestion (or internal energy) forms for $\mathcal F$, where the results in the last subsection can be combined with results in subsection  \ref{sect: bounds imply nestedness}.

We consider \eqref{minPbNu}, when the dimension of $\Y$ is $n=1$, and $\mathcal F[\nu] =\int_Yf(\bar \nu)dy$, and $f$ satisfies the conditions in subsection \ref{subsect: density bounds}.

Combining Corollary \ref{cor: sufficient conditions for nestedness} with the lower bound on the target density from  Proposition \ref{prop: upper bound on density from conjestion} yields the following.
\begin{theorem}
 \label{nestedness_congestion}  
Suppose that $\nu$ minimizes \eqref{minPbNu} 
over $P(\bar \Y)$, where $\Y =(0,\bar y)$ and $\X \subseteq \mathbb{R}^n$.  Then $(c,\mu,\nu)$ is nested provided 
$$
 \sup_{y_1 \in \Y,y_0\leq y\leq y_1} 	\frac{D^{min}_\mu(y_0,y_1,k(y_0))}{y_1-y_0}-(f')^{-1}(K_{-M_cy}(1)-M_cy)<0
$$
for all $y_0 \in \Y$.
\end{theorem}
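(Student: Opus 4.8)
The plan is to combine the two main tools already established in this section: the explicit lower bound on the density of \emph{any} minimizer of \eqref{minPbNu} provided by Proposition \ref{prop: upper bound on density from conjestion}, and the density-based sufficient condition for nestedness from Corollary \ref{cor: sufficient conditions for nestedness}. The point is that the lower bound in Proposition \ref{prop: upper bound on density from conjestion} depends only on $c$ and $\mu$ (through the Lipschitz constant $M_c$) and on $f$, but \emph{not} on the minimizer $\nu$ itself; it therefore replaces the a priori unknown density $\bar\nu$ by an explicit barrier, and the hypothesis of the theorem is exactly what is needed to feed this barrier into Corollary \ref{cor: sufficient conditions for nestedness}.

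Concretely, I would let $\nu$ be any minimizer of \eqref{minPbNu}. By Proposition \ref{prop: upper bound on density from conjestion}, $\nu$ is absolutely continuous with respect to Lebesgue measure, and since $\Y=(0,\bar y)$ with $0\in\bar\Y$ we have $|y|=y$ on $\Y$, so
$$
\bar\nu(y)\ \geq\ (f')^{-1}\!\big(K_{-M_c y}(1)-M_c y\big)\qquad\text{for all } y\in\Y .
$$
Now fix $y_0\in\Y$ and let $k(y)$ be defined by \eqref{eqn: def of k} relative to this particular $\nu$. For every $y_1\in\Y$ with $y_1>y_0$ and every $y\in[y_0,y_1]$, the displayed lower bound gives
$$
\frac{D^{min}_\mu(y_0,y_1,k(y_0))}{y_1-y_0}-\bar\nu(y)\ \leq\ \frac{D^{min}_\mu(y_0,y_1,k(y_0))}{y_1-y_0}-(f')^{-1}\!\big(K_{-M_c y}(1)-M_c y\big).
$$
Taking the supremum over $y_1\in\Y$ and $y_0\le y\le y_1$, the right-hand side is, by the hypothesis of the theorem, strictly negative, hence so is the supremum of the left-hand side. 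This is precisely the hypothesis of Corollary \ref{cor: sufficient conditions for nestedness} for this $y_0$; since $y_0\in\Y$ was arbitrary (and $\mu$ is assumed absolutely continuous, as in Theorem \ref{lemma: nestedness condition}), Corollary \ref{cor: sufficient conditions for nestedness} yields that $(c,\mu,\nu)$ is nested.

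Since every step is a direct application of an already-proven statement, there is no genuine obstacle; the only points requiring a moment's care are bookkeeping ones: that $k(y_0)$ appearing in the theorem's hypothesis is understood to be the function attached to the minimizer $\nu$ under consideration, and the identification $|y|=y$ on $\Y$, which is why the standing normalization $0\in\bar\Y$ is imposed. All of the substantive work has been carried out earlier — deriving the density bounds from the Euler--Lagrange relation $v+f'(\bar\nu)=C$ together with the $M_c$-Lipschitz estimate on the Kantorovich potential $v$ in Proposition \ref{prop: upper bound on density from conjestion}, and the passage from Theorem \ref{lemma: nestedness condition} to Corollary \ref{cor: sufficient conditions for nestedness}.
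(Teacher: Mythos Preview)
Your proposal is correct and follows exactly the approach of the paper, which simply states that the theorem is obtained by combining Corollary~\ref{cor: sufficient conditions for nestedness} with the lower bound on the target density from Proposition~\ref{prop: upper bound on density from conjestion}. Your write-up in fact spells out the (straightforward) details the paper leaves implicit.
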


In particular, we note the following consequence for our example matching the quarter circle to an arc with the bilinear cost.

 \begin{corollary}\label{cor: disk to circle full}
Suppose that $\nu$ minimizes \eqref{minPbNu} 
 	over $P(\bar \Y)$, where $\mu$ is uniform measure on the quarter disk $\X:=\{x_1,x_2>0: x_1^2+x_2^2 < 1 \} \subseteq \mathbb{R}^2$, $\Y=(0, \bar y)$, $F[\nu] = \int_Y\bar \nu\ln(\bar \nu)dy$ is the entropy and $c(x,y) =-x \cdot (\cos y, \sin y). $ 
 Then the model $(c,\mu,\nu)$ is nested provided $\bar y \leq  \ln \Big (\frac{1+\sqrt{(1+2\pi)}}{2} \Big)\approxeq 0.61$.
 \end{corollary}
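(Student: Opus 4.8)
The plan is to read this off Theorem \ref{nestedness_congestion}, whose hypothesis, in this setting ($\X$ the quarter disk, $\Y=(0,\bar y)$, $\mathcal F[\nu]=\int_\Y\bar\nu\ln\bar\nu\,dy$, $c(x,y)=-x\cdot(\cos y,\sin y)$), is the scalar inequality
$$\sup_{y_1\in\Y,\,y_0\le y\le y_1}\Big[\tfrac{D^{min}_\mu(y_0,y_1,k(y_0))}{y_1-y_0}-(f')^{-1}\big(K_{-M_cy}(1)-M_cy\big)\Big]<0\qquad\text{for all }y_0\in\Y.$$
So the only work is to substitute the two ingredients that have already been computed for precisely this model and convert the inequality into a bound on $\bar y$. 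The first ingredient is the geometric estimate from Example \ref{ex: disk to arc}: since $\mu$ is uniform on the quarter disk and the level curves $X_=(y,k)$ are the stated line segments, one has $D^{min}_\mu(y_0,y_1,k(y_0))\le\frac2\pi(y_1-y_0)$, so the first term in the bracket is at most $\frac2\pi$. The second ingredient is the explicit form of the congestion term: here $f'(\lambda)=\ln\lambda$ (up to the harmless additive constant), $M_c=1$, and Example \ref{ex: required lower bound} already evaluated $K_{-M_cy}(1)=-\ln(e^{\bar y}-1)$, whence $(f')^{-1}\big(K_{-M_cy}(1)-M_cy\big)=\dfrac{e^{-y}}{e^{\bar y}-1}$ — exactly the lower bound \eqref{eqn: lower bound on CN density} on the minimizing density.

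Inserting both, it suffices to have $\frac2\pi<\dfrac{e^{-y}}{e^{\bar y}-1}$ for every $y$ entering the supremum. As $y\mapsto e^{-y}/(e^{\bar y}-1)$ is decreasing and each such $y$ satisfies $y\le y_1<\bar y$, this quantity is bounded below by its limiting value $e^{-\bar y}/(e^{\bar y}-1)$, so it is enough that $\frac2\pi\le\dfrac{e^{-\bar y}}{e^{\bar y}-1}$. Putting $t=e^{\bar y}>1$ and clearing denominators, this reads
$$\tfrac2\pi\,t(t-1)\le1,\qquad\text{i.e.}\qquad t^2-t-\tfrac\pi2\le0,\qquad\text{i.e.}\qquad t\le\tfrac{1+\sqrt{1+2\pi}}{2},$$
so the condition becomes $e^{\bar y}\le\frac{1+\sqrt{1+2\pi}}{2}$, that is $\bar y\le\ln\!\big(\frac{1+\sqrt{1+2\pi}}{2}\big)\approxeq0.61$, as claimed.

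I do not expect any real obstacle here, since Examples \ref{ex: disk to arc} and \ref{ex: required lower bound} already carry all of the substantive content and what remains is bookkeeping plus solving a quadratic. The only point deserving a word of care is the gap between the strict inequality that Theorem \ref{nestedness_congestion} demands and the non-strict threshold in the statement: for $\bar y$ strictly below $\ln\!\big(\frac{1+\sqrt{1+2\pi}}{2}\big)$ the supremum above is genuinely negative and nothing more is needed, while at the endpoint it only equals $0$, and to retain equality one must appeal to the fact that the estimates of Example \ref{ex: disk to arc} (strict away from $k(y_0)=0$) and of the Lipschitz bound underlying Proposition \ref{prop: upper bound on density from conjestion} (not sharp, cf. Remark \ref{rem: sharpness of bound}) leave a margin.
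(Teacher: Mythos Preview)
Your proposal is correct and follows essentially the same approach as the paper: the paper's own proof simply says one must check that the lower bound $\frac{e^{-\bar y}}{e^{\bar y}-1}$ from Example~\ref{ex: required lower bound} exceeds the threshold $\frac{2}{\pi}$ from Example~\ref{ex: disk to arc}, and leaves the quadratic calculation implicit; you have spelled out that calculation. Your remark about the endpoint (strict versus non-strict inequality) is a fair observation that the paper glosses over.
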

 \begin{proof}
 	We only need to verify that the lower bound $\frac{e^{-\bar y }}{e^{\bar y} -1}$ on any solution $\nu$ obtained in Example \ref{ex: required lower bound},  is larger than the bound $\frac{2}{\pi}$, shown in Example \ref{ex: disk to arc} to guarantee nestedness.  This is an easy calculation.
 \end{proof}

\begin{remark}
	The sufficient condition for nestedness in Theorem \ref{nestedness_congestion} is not necessary, since the bound on $\bar \nu$ ensuring nestedness in Proposition \ref{prop: upper bound on density from conjestion} is not sharp in general (see Remark \ref{rem: sharpness of bound}).
	
	The upper bound on $\bar y$ in Corollary \ref{cor: disk to circle full} guaranteeing nestedness is not sharp, for two reasons:
	\begin{enumerate}
		\item The lack of sharpness in Theorem \ref{nestedness_congestion} described above.
		\item The bound on $D^{min}_\mu(y_0,y_1,k_0)$ in Example \ref{ex: disk to arc} is sharp only when $k_0 =0$, in which case $k_{max}(y_0,y_1,k_0)=0$, and the level curves $X_=(y_0,k_0)$ and $X_=(y_1,k_{max}(y_0,y_1,k_0))$ intersect at the origin. 
	\end{enumerate}  
It does not seem possible to get around the first issue above with the current techniques.  However, on the second issue, one can sometimes use local information on $k(y)$ to get improved control on $D^{min}_\mu(y_0,y_1,k(y_0))$.  In the context of Corollary \ref{cor: disk to circle full}, the local bounds from Example \ref{ex: required lower bound} on $\bar \nu(y)$ force the level sets to intersect higher up on the $x_2$ axis.  A more refined calculation, combining analytical and numerical methods,  shows that the solution in the Corollary is in fact nested as long as $\bar y$ is less than a certain upper bound, estimated numerically to be $\tilde y \approxeq 0.66$; details can be found in Appendix \ref{App}.

Even this bound is not sharp, however, because of issue 1 described above.  We note that when $\bar y =\frac{\pi}{2}$, and so the solution $\nu$ is constant as discussed in Remark \ref{rem: sharpness of bound}, the model is borderline nested in some sense: $k(y)=0$, so that all the $X_=(y,k(y))$ intersect at the origin, which lies in $\partial X$.  It is unclear to us whether solutions stay nested for all choices of $\bar y \in (\tilde y, \frac{\pi}{2}]$.
\end{remark}
 
 Once we have determined a priori that the solution must be nested, we can characterize it by a differential equation on the lower dimensional space (ie, an ordinary differential equation). 
 
 Note that, for a nested model, setting $k(y) =v'(y)$, one has, by \eqref{MAmulti2one},
 $$
 \bar \nu(y) =\int_{X_=(y,k(y))}\frac{D^2_{yy}c(x,y)-k'(y)}{|D^2_{xy}c(x,y)|}\bar \mu(x)d\mathcal H^{m-1}:=G(y,k(y),k'(y))
 $$

 and so differentiating the first order condition $v(y) + f'(\bar \nu(y))=C$ , we get the following second order differential equation for $k$\footnote{Generally speaking, the Kantorovich potential $v(y)$ is twice differentiable (so that $k'(y) =v''(y)$  is well defined) almost everywhere,   and the equation \eqref{MAmulti2one}, holds at these points \cite{PassM2one}. Equation \eqref{eqn: ode for CN} requires an extra derivative of $k$; in the present, nested setting, under certain conditions, one can actually deduce local continuous twice differentiability of $k$ (see Remark \ref{rem: bootstrapping}).  If these conditions do not hold, instead of using \eqref{eqn: ode for CN} one can  solve the first order condition for $\bar \nu(y)$ (with, say, $C=0$) and use \eqref{MAmulti2one} to write the second order equation for $v$: $G(y,v'(y),v''(y)) =(f')^{-1} (-v(y))$, which is well defined and holds almost everywhere.}:
\begin{equation}\label{eqn: ode for CN}
 k(y) +f''(G(y,k(y),k'(y)))\frac{d}{dy}[G(y,k(y),k'(y))]=0.
\end{equation}
 In addition,  we can derive boundary conditions for \eqref{eqn: ode for CN}: since we know the solution is nested, we have $\lim_{y \rightarrow 0^+}\mu(X_\geq(y,k(y))) =  \lim_{y \rightarrow 0^+}\nu(0,y) =0$, so that $k(0):=\lim_{y \rightarrow 0^+}k(y)$ exists, we have $\mu(X_\geq(0,k(0)))=0$ and $X_=(0,k(0))$ is tangent to $\partial X$.   This, and a similar argument as $y$ approaches $\bar y$ suggests that we impose the boundary conditions:
 
\begin{equation}\label{eqn: boundary conditions}
 k(0) =\max_{x \in \bar \X}\frac{\partial c}{\partial y}(x,0), \text{ }k(\bar y) =\min_{x \in \bar \X}\frac{\partial c}{\partial y}(x,\bar y).
\end{equation}
 Under the conditions of Corollary \ref{cor: sufficient conditions for nestedness}, any minimizer of \eqref{minPbNu} gives a solution to the above boundary value problem.  Conversely, it is not hard to see that given a solution $k(y)$ to \eqref{eqn: ode for CN} with boundary conditions \eqref{eqn: boundary conditions}, then $\bar \nu(y) =G(y,k(y),k'(y))$ is a minimizer provided that $v(y) =\int_0^yk(y)dy$ is $c$-concave.


 \begin{remark}[Bootstrapping to regularity]\label{rem: bootstrapping}	It is well known that the potential function $v(y)$ is Lipschitz \cite{McCann2001}; as the optimal $\nu$ has a lower bound, the equality $v(y) +f'(\bar \nu(y)) =C$ holds throughout $\Y$, and inverting yields that $\bar \nu$ is Lipschitz, with a constant determined by $f'$ and the upper and lower bounds on $\bar \nu$.
 	
 	Now, one can combine the optimality condition with the regularity theory developed in \cite{PassM2one} to bootstrap to higher regularity; nestedness together with $\bar \nu \in C^{0,1}(\Y)$ yields that $v$ is locally $C^{2,1}$, by Theorem 7.1 in \cite{PassM2one}. This in turn yields that $\bar \nu$ is in fact  $C^{2,1}$ (using $v(y) =C-f'(\bar \nu(y))$ again, and assuming sufficient smoothness of $f$); continuing in this way, we get that $\bar \nu(y), v(y)$ are locally $C^{r+1,1}$, provided $\bar \mu$ is $C^{r-1,1}$, $D_yc \in C^{r,1}$,   $\mathbf{n}_{\X} \in C^{r-2,1}$ ($\mathbf{n}_X$ denotes the outward unit normal to $\X$), and $\overline{X_=(y,k(y))}$ intersects $\partial X$ transversally.  The local norms are controlled by the quantities listed in Theorem 7.1 in \cite{PassM2one}.
 \end{remark}

 \subsection{Minimizing on the low dimensional marginal: interaction and potential terms}\label{subsect: interaction and potential}
In this section, we will consider the case where $\mathcal{F}$ consists of interaction and potential terms; that is, suppose that $\nu$ minimizes the following functional on $ \PP(\bar \Y)$:
\begin{equation}\label{minInt}
\nu \mapsto \mathcal T_c(\mu,\nu) +\int_Y V(y)d\nu(y) +\frac{1}{2}\int_Y\int_YW(y,z)d\nu(z)d\nu(y)
\end{equation}
where $\mu$ is a given probability measure on a set $\X \subseteq \mathbb{R}^m$, $\Y\subseteq \mathbb{R}^n$ with $m>n$ and the interaction term $W(y,z)=W(z,y)$ is symmetric.  We will denote by $F_{V,W}[\nu]$ the first variation of $\mathcal{F}$; that is,
\[F_{V,W}[\nu](y):=V(y) +\int_YW(y,z)d\nu(z).\]  In this case, we do not generally expect lower bounds on the density $\bar \nu$, and so the results from Section \ref{sect: bounds imply nestedness} tell us little about the structure of solutions.  However, under certain conditions, we will be able to use the optimality conditions directly to infer  generalized nestedness, as we will see below.

Assume throughout this section convexity of $\Y$ and uniform convexity of $y \mapsto c(x,y) +V(y) +W(z,y)$ throughout $\X \times \Y \times \Y$; that is, for all $x,y,z$, we have
\begin{equation}\label{eqn: uniform convexity assumption}
D^2_{yy}c(x,y) +D^2V(y) +D^2_{yy}W(z,y) \geq C>0.
\end{equation}
Also assume that for each $x \in \X$, $z \in \Y$ and $y \in \partial \Y$ that 
\begin{equation}\label{eqn: boundary max assumption}
[D_yc(x,y) +D V(y) +D _yW(y,z)] \cdot \mathbf{n}_{\Y}(y)\geq 0,
\end{equation}
where $\mathbf{n}_{\Y}$ is the outward unit normal to $\Y$.

\begin{theorem}\label{lemma: nestedness from interaction}
	Under the uniform convexity \eqref{eqn: uniform convexity assumption} and outward gradient \eqref{eqn: boundary max assumption} assumptions, $(c, \mu,\nu)$  satisfies the generalized nestedness condition for any minimizer $\nu$ of \eqref{minInt}.  Furthermore, if $c$, $V$ and $W$ are $C^k$ smooth (for any integer $k\geq 2$), then the optimal map between $\mu$ and $\nu$ is $C^{k-1}$.
\end{theorem}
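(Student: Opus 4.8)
The plan is to use the first-order optimality conditions for the constrained minimizer $\nu$ to write the Kantorovich potential $v$ for $\mathcal T_c(\mu,\nu)$ in essentially closed form, and then to read off generalized nestedness and regularity from that formula. (A minimizer of \eqref{minInt} exists by weak-$*$ compactness of $P(\bar\Y)$ and weak-$*$ continuity of the three terms.) Writing $(u,v)$ for the Kantorovich potentials, $u=v^c$, $v=u^c$, and recalling that the first variation of $\nu\mapsto\mathcal T_c(\mu,\nu)$ is $v$, optimality of $\nu$ produces a constant $C$ with $v+F_{V,W}[\nu]=C$ on $\spt\nu$ and $v+F_{V,W}[\nu]\ge C$ on $\bar\Y$. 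Setting
\begin{equation*}
\tilde v(y):=C-F_{V,W}[\nu](y)=C-V(y)-\int_\Y W(y,z)\,d\nu(z),
\end{equation*}
I then have $v=\tilde v$ on $\spt\nu$, $v\ge\tilde v$ on $\bar\Y$, and $\tilde v\in C^2(\bar\Y)$ (differentiating under the integral, using smoothness of $V$ and $W$). For $x\in\bar\X$ put $g_x(y):=c(x,y)-\tilde v(y)$. Averaging \eqref{eqn: uniform convexity assumption} against $d\nu(z)$ gives $D^2_{yy}g_x(y)=D^2_{yy}c(x,y)+D^2V(y)+\int_\Y D^2_{yy}W(y,z)\,d\nu(z)\ge C>0$, so each $g_x$ is uniformly convex on the convex set $\bar\Y$; averaging \eqref{eqn: boundary max assumption} gives $D_yg_x\cdot\mathbf n_\Y\ge 0$ on $\partial\Y$. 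Together these force $g_x$ to have a unique minimizer $Y(x)$ over $\bar\Y$, which is moreover its unique critical point, so that
\begin{equation*}
X_=(y_0,D\tilde v(y_0))=\{x:\ D_yg_x(y_0)=0\}=Y^{-1}(y_0)\qquad\text{for every }y_0\in\bar\Y.
\end{equation*}

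Next I would show that the optimal plan is carried by the graph of $Y$ and that $(\tilde v^c,\tilde v^{cc})$ is an optimal Kantorovich pair. Since $c$ is twisted and $\mu$ absolutely continuous, the optimal plan is $(\id,T)_\sharp\mu$ for a transport map $T$; for $\mu$-a.e.\ $x$, $T(x)\in\spt\nu$, where $v$ coincides with $\tilde v\in C^1$, so the equality $u(x)+v(T(x))=c(x,T(x))$ together with the first-order condition at $T(x)$ (the case $T(x)\in\partial\Y$ being absorbed by \eqref{eqn: boundary max assumption}) gives $D_yg_x(T(x))=0$, i.e.\ $T(x)=Y(x)$. Since $\tilde v^c(x)=\min_{\bar\Y}g_x=g_x(Y(x))$, admissibility gives $\tilde v^c(x)+\tilde v^{cc}(T(x))\le c(x,T(x))$, while $\tilde v^c(x)+\tilde v^{cc}(T(x))\ge\tilde v^c(x)+\tilde v(Y(x))=g_x(Y(x))+\tilde v(Y(x))=c(x,T(x))$; integrating the resulting equality against $\mu$ shows $(\tilde v^c,\tilde v^{cc})$ attains the dual value $\mathcal T_c(\mu,\nu)$ and forces $\tilde v^{cc}=\tilde v$ on $\spt\nu$. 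By uniqueness of Kantorovich potentials up to additive constants (the mild conditions recalled in Section~\ref{recall}) I may then take $u=\tilde v^c$ and $v=\tilde v^{cc}$.

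At this point nestedness is essentially automatic. One always has $\partial^cv(y_0)\subseteq X_=(y_0,Dv(y_0))$ where defined, and at $\nu$-a.e.\ $y_0\in\spt\nu$ the potential $v=\tilde v$ is differentiable with $Dv(y_0)=D\tilde v(y_0)$ ($v$ is $c$-concave, hence semiconcave, and is touched from below at $y_0$ by the $C^1$ function $\tilde v$; \eqref{eqn: boundary max assumption} keeps $\spt\nu$ off $\partial\Y$). For such $y_0$ and any $x_0\in X_=(y_0,Dv(y_0))=Y^{-1}(y_0)$ one has $Y(x_0)=y_0$, whence $u(x_0)+v(y_0)=\tilde v^c(x_0)+\tilde v(y_0)=g_{x_0}(y_0)+\tilde v(y_0)=c(x_0,y_0)$, i.e.\ $x_0\in\partial^cv(y_0)$; thus $\partial^cv(y_0)=X_=(y_0,Dv(y_0))$ for $\nu$-a.e.\ $y_0$, which is generalized nestedness. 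For the regularity claim, once $T=Y$ is known, $T(x)$ solves $\Phi(x,y):=D_yc(x,y)+D_yV(y)+\int_\Y D_yW(y,z)\,d\nu(z)=0$; if $c,V,W\in C^k$ then $\Phi\in C^{k-1}$ (differentiating in $y$ under the $d\nu(z)$ integral costs no regularity) and $D_y\Phi=D^2_{yy}g_x$ is uniformly invertible by \eqref{eqn: uniform convexity assumption}, so the implicit function theorem yields $T=Y\in C^{k-1}$.

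The main obstacle is the second step: a priori the optimality condition controls $v$ only on $\spt\nu$, and upgrading this to the global identification $v=\tilde v^{cc}$ (equivalently $T=Y$) is precisely where the uniform convexity and outward-gradient hypotheses \eqref{eqn: uniform convexity assumption} and \eqref{eqn: boundary max assumption} enter — they make the auxiliary map $Y$ single-valued with interior values and make $(\tilde v^c,\tilde v^{cc})$ a genuine optimal pair, after which the remainder is formal.
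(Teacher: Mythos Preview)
Your argument is correct and follows essentially the same route as the paper's proof: both use the first-order optimality condition to identify $v$ with $C-F_{V,W}[\nu]$ on $\spt\nu$, exploit uniform convexity \eqref{eqn: uniform convexity assumption} together with the outward-gradient assumption \eqref{eqn: boundary max assumption} to show that $y\mapsto c(x,y)-\tilde v(y)$ has a unique critical point $Y(x)$, identify $Y$ with the optimal map, and then read off both nestedness and $C^{k-1}$ regularity via the implicit function theorem. The paper organizes the middle step slightly differently---it works directly with the given pair $(u,v)$ and the chain of inequalities $c(x,y)-v^c(x)+F_{V,W}[\nu](y)\ge v(y)+F_{V,W}[\nu](y)\ge 0$, rather than producing a new optimal pair $(\tilde v^c,\tilde v^{cc})$ and invoking uniqueness of Kantorovich potentials---but the content is the same.

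One small slip: your parenthetical claim that \eqref{eqn: boundary max assumption} keeps $\spt\nu$ off $\partial\Y$ is not justified (the integrated inequality only gives $D_yg_x\cdot\mathbf n_\Y\ge 0$, which is compatible with a boundary minimizer having vanishing gradient). The paper instead treats $\bar y\in\partial\Y$ directly, showing that the first-order condition $Dv(\bar y)=-DF_{V,W}[\nu](\bar y)$ still holds there by a short normal-cone argument; your proof goes through unchanged once you replace the unsupported claim with that observation.
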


\begin{proof} 
	For any solution $\nu$, we have the optimality condition
	
	$$
	v(y) +V(y) +\int_YW(z,y)d\nu(z)\geq 0
	$$
	with equality	$\nu$ almost everywhere, where $v(y)$ is the Kantorovich potential for the optimal transport problem \eqref{eqn: ot problem} between  $\nu$ and $\mu$.
	Integrating the uniform convexity assumption against $\nu(z)$, we get that 
	
	\begin{equation}\label{eqn: integrated uniform convexity}
	D^2_{yy}c(x,y) +D^2V(y) +\int_YD^2_{yy}W(z,y)d\nu(z) \geq C>0.
	\end{equation}
	Choose $\bar y \in spt(\nu) \cap \Y$ where $v$ is differentiable and $x \in X_=(\bar y, Dv(\bar y)) =\{x \in \X: D_yc(x,\bar y) =Dv(\bar y)\}$; we must show that $x \in \partial^cv(\bar y)$.  The (uniformly convex by \eqref{eqn: integrated uniform convexity}) function
	$$
	y \mapsto c(x,y)-v^c(x)+F_{V,W}[\nu](y):=c(x,y) -v^c(x) +V(y) +\int_YW(z,y)d\nu(z)
	$$
	has  a unique minimum $\tilde y$.  Now,  if that minimum is in the interior of $\Y$, the gradient vanishes there.  We claim that the gradient must vanish even if the minimum occurs on the boundary; in this case, the gradient must be a non positive multiple of the outward unit normal.  However, integrating  \eqref{eqn: boundary max assumption} against $\nu(z)$ implies 
	\begin{equation*}
	\begin{split}
	&\left[D_yc(x, \tilde y) +D V( \tilde y) +\int_YD_yW(x,\tilde y)d\nu(z) \right]\cdot \mathbf{n}_{\Y}(\tilde y)\\& =	\left[D_yc(x,\tilde y) +DF_{V,W}[\nu](\tilde y)\right]\cdot \mathbf{n}_{\Y}(\tilde y) \geq 0,
	\end{split}
	\end{equation*}
	which is only possible if $D_yc(x, \tilde y) +DF_{V,W}[\nu](\tilde y) =0$, establishing the claim.  Furthermore, by strict convexity, $\tilde y$ is the \emph{only} $y \in \bar \Y$ where $D_yc(x,  y) +DF_{V,W}[\nu]( y) =0$.
	
	 Noting the string of inequalities
	\begin{equation}\label{eqn: CN - OT inequalities}
	c(x,y) -v^c(x) +F_{V,W}[\nu](y) \geq v(y) +F_{V,W}[\nu](y) \geq 0,
	\end{equation}
	the minimum $\tilde y$ must coincide with the unique $y \in spt(\nu)$ such that equality holds, and we have $x \in \partial^cv(\tilde y)$.   
	
	We now show that $\tilde y =\bar y$.  To this end, we claim that  
	\begin{equation}\label{eqn: potential}
	Dv(\bar y) = -DF_{V,W}[\nu](\bar y).
	\end{equation}
	  Note that  as $\bar y \in spt(\nu)$, we have equality at $y=\bar y$ in the second inequality in \eqref{eqn: CN - OT inequalities}, and so if $\bar y$ is in the interior of $\Y$,  we get \eqref{eqn: potential} by minimality.  If $\bar y \in \partial \Y$, then $Dv(\bar y) + DF_{V,W}[\nu](\bar y)=\alpha \mathbf{n}_{\Y}(\bar y) $ with $\alpha \leq 0$.  We let $\tilde x \in \partial^cv(\bar y)$ (which is non-empty as $\bar y \in spt(\nu)$).  Then we have equality in \eqref{eqn: CN - OT inequalities} with $x=\tilde x$ and $y=\bar y$, and an identical argument to above (using \eqref{eqn: boundary max assumption} and minimality of the function) implies $D_yc(\tilde x,\bar y)  +D_yF_{V,W}[\nu](\bar y) =0$.  The non-negative function 
	  
	  $$
	  y \mapsto c(\tilde x,y) -v^c(\tilde x) +F_{V,W}[\nu](y) -\big( v(y) +F_{V,W}[\nu](y)\big)
	  $$
	  is then minimized at $\bar y$, and its gradient there must be a non-positive multiple of $\mathbf n_{\Y}(\bar y)$.  But this gradient is 
	  $$
	  D_yc(\tilde x,\bar y)  +DF_{V,W}[\nu](\bar y) -\big(Dv(\bar y) + DF_{V,W}[\nu](\bar y)\big)=0 -\alpha \mathbf{n}_{\Y}(\bar y) =-\alpha \mathbf{n}_{\Y}(\bar y)
	  $$
	  Thus, $\alpha \geq 0$, which (as $\alpha \leq 0$ as well) means $\alpha =0$.  This establishes \eqref{eqn: potential}.
	  
	   Therefore, we have
	$$
	D_yc(x,\bar y)+DF_{V,W}[\nu](\bar y) = D_yc(x,\bar y)-Dv(\bar y)=0
	$$
	and so $\bar y$ coincides with the minimum $\tilde y$  and $x \in \partial^cv(\bar y)$, as desired.
	
	We have now shown that $X_=(y,Dv(y)) =\partial^cv(y)$ for every $y \in \Y$ such that $v$ is differentiable.  To verify  generalized nestedness, we must show that this is $\nu$ almost every $y$.

	This follows by noting that the string of inequalities \eqref{eqn: CN - OT inequalities} implies that the semi-concave function $v$ is bounded from below by the smooth function $-F_{V,W}[\nu](y)$, with equality $\nu(y)$ almost everywhere.  At any point of equality, the gradient of $-F_{V,W}[\nu](y)$ is a subgradient for the everywhere superdifferentiable function $v$, and $v$ must therefore be differentiable there.
	
	The claimed regularity comes from the fact that the optimal map $T(x)$ coincides with the unique $y$ such that 
\begin{equation}\label{eqn: optimal map}
	D_{y}c(x,y) +DV(y) +\int_YD_{y}W(z,y)d\nu(z)=0,
\end{equation}
	combined with the implicit function theorem (noting that the left hand side of \eqref{eqn: optimal map} is the differential of a smooth uniformly convex function). 
\end{proof}

\begin{corollary}
	Under the assumptions of the lemma, the support  $spt(\nu) =T(spt(\mu))$ is connected if $spt(\mu)$ is.
\end{corollary}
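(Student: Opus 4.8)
The plan is to deduce the connectedness of $spt(\nu)$ from the continuity of the optimal map $T$ already produced in Theorem~\ref{lemma: nestedness from interaction}, via the general principle that the support of a push-forward under a continuous map is the closure of the image of the support. First I would note that, since $(c,\mu,\nu)$ is nested by Theorem~\ref{lemma: nestedness from interaction}, \cite{PassM2one}[Theorem 4] shows the unique optimal plan is $(\id,T)_\sharp\mu$, hence $\nu=T_\sharp\mu$; moreover $T$ is continuous on the compact set $\bar\X$, since $T(x)$ is characterized by \eqref{eqn: optimal map} as the unique critical point --- equivalently, by the outward gradient argument in the proof of Theorem~\ref{lemma: nestedness from interaction}, the unique minimizer --- of the jointly continuous, uniformly-convex-in-$y$ function $y\mapsto c(x,y)+F_{V,W}[\nu](y)$ (uniform convexity being \eqref{eqn: integrated uniform convexity}). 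Continuity in the parameter $x$ of the argmin of such a family is standard (e.g.\ Berge's maximum theorem); one in fact gets $C^{k-1}$ regularity when $c,V,W\in C^k$, although mere continuity suffices here.

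Next I would establish $spt(\nu)=T(spt(\mu))$. Since $\bar\X$ is compact, $spt(\mu)$ is compact, so $T(spt(\mu))$ is compact and in particular closed. For the inclusion $T(spt(\mu))\subseteq spt(\nu)$: if $y=T(x)$ with $x\in spt(\mu)$ and $U$ is any open neighbourhood of $y$, then $T^{-1}(U)$ is an open neighbourhood of $x$, so $\nu(U)=\mu(T^{-1}(U))>0$ and hence $y\in spt(\nu)$; since $spt(\nu)$ is closed this gives the inclusion. For the reverse inclusion $spt(\nu)\subseteq T(spt(\mu))$: if $y\notin T(spt(\mu))$, then, $T(spt(\mu))$ being closed, some open $U\ni y$ misses it, so $T^{-1}(U)\cap spt(\mu)=\emptyset$ and $\nu(U)=\mu(T^{-1}(U))=0$, whence $y\notin spt(\nu)$.

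Finally, the continuous image of a connected set is connected, so if $spt(\mu)$ is connected then $spt(\nu)=T(spt(\mu))$ is connected, which is the claim. I do not anticipate a serious obstacle: the one point needing a little care is obtaining the exact identity $spt(\nu)=T(spt(\mu))$ --- rather than the a priori weaker $spt(\nu)=\overline{T(spt(\mu))}$ --- which hinges on $T(spt(\mu))$ being closed, hence on continuity of $T$ up to $\partial\X$ together with compactness of $spt(\mu)$; both are supplied by the setup above.
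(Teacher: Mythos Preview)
Your proof is correct and follows exactly the paper's route, whose entire proof is the single sentence \emph{``This follows from continuity of the optimal map $T$''}; you have simply filled in the details of why $T$ is continuous and why $spt(\nu)=T(spt(\mu))$. One small quibble: Theorem~\ref{lemma: nestedness from interaction} is stated for targets of arbitrary dimension $n\geq 1$, so invoking \cite{PassM2one}[Theorem~4] (which is specific to $n=1$) for $\nu=T_\sharp\mu$ is not quite the right citation --- but you immediately give the correct general argument via the characterization \eqref{eqn: optimal map}, so no harm is done.
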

\begin{proof}
	This follows from continuity of the optimal map $T$.
\end{proof}
 Generalized  nestedness of the solution and \cite{mccann2018optimal} now combine to imply the following result:
\begin{corollary}\label{cor: absolute continuity of nu}
	Assume that $c$ is twisted and non-degenerate, and adopt the assumptions of Theorem \ref{lemma: nestedness from interaction}.  Then the minimizer $\nu$ is absolutely continuous and its density satisfies the integral Monge-Ampere type equation \eqref{eqn: local unequal MA} almost everywhere.
\end{corollary}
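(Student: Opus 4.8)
The strategy is to feed the generalized nestedness already established in Theorem~\ref{lemma: nestedness from interaction} into the analysis of nested models from \cite{mccann2018optimal}. Recall from the discussion preceding \eqref{eqn: local unequal MA} that if $(c,\mu,\nu)$ is nested and \emph{both} marginals are absolutely continuous, then $\bar\nu$ solves the local Monge--Ampère equation \eqref{eqn: local unequal MA} a.e.\ \cite{mccann2018optimal}. Since Theorem~\ref{lemma: nestedness from interaction} already gives nestedness, the only thing left to prove is that the minimizer $\nu$ — which a priori is just a probability measure — is absolutely continuous. (Here, as everywhere in this section when densities are written, I take $\mu \ll \mathcal{L}^m$.) I would obtain this from the explicit description of the optimal map $T$ supplied by Theorem~\ref{lemma: nestedness from interaction}.

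First I would check that $T$ is a $C^1$ submersion. By \eqref{eqn: optimal map}, $T(x)$ is, for every $x\in\bar\X$, the unique $y\in\bar\Y$ with $D_yc(x,y)+DF_{V,W}[\nu](y)=0$ (existence, uniqueness, and validity of this first-order condition even on $\partial\Y$ being part of the proof of Theorem~\ref{lemma: nestedness from interaction}, via \eqref{eqn: boundary max assumption}). The $y$-Hessian of $y\mapsto c(x,y)+F_{V,W}[\nu](y)$ is $D^2_{yy}c(x,y)+D^2F_{V,W}[\nu](y)$, which is $\geq C>0$ by the integrated uniform convexity \eqref{eqn: integrated uniform convexity}, hence invertible; so the implicit function theorem yields $T\in C^1$ with
$$
DT(x)=-\bigl(D^2_{yy}c(x,T(x))+D^2F_{V,W}[\nu](T(x))\bigr)^{-1}D^2_{yx}c(x,T(x)).
$$
Because the non-degeneracy hypothesis forces $D^2_{yx}c$ to have full rank $n$, the $n\times m$ matrix $DT(x)$ has rank $n$ at every $x$, i.e.\ $T$ is a submersion and the coarea factor $J_T(x)=\sqrt{\det\bigl(DT(x)\,DT(x)^{\top}\bigr)}$ is strictly positive on $\bar\X$.

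Next I would deduce $\nu\ll\mathcal{L}^n$. For $A\subseteq\bar\Y$ with $\mathcal{L}^n(A)=0$, the coarea formula for the $C^1$ map $T$ gives $\int_{T^{-1}(A)}J_T\,dx=\int_A\mathcal{H}^{m-n}\bigl(T^{-1}(y)\bigr)\,dy=0$; since $J_T>0$ this forces $\mathcal{L}^m(T^{-1}(A))=0$, whence $\mu(T^{-1}(A))=0$ and therefore $\nu(A)=(T_\sharp\mu)(A)=0$. Thus $\nu$ is absolutely continuous, and combined with nestedness, \cite{mccann2018optimal} gives that $\bar\nu$ satisfies \eqref{eqn: local unequal MA} a.e., which is the assertion.

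The step I expect to be the genuine obstacle is precisely the absolute continuity of $\nu$ — proving that $T$ carries Lebesgue-null sets to Lebesgue-null sets. This hinges on the submersion property, which in turn combines the non-degeneracy of $c$ with the uniform convexity \eqref{eqn: uniform convexity assumption}; one should also verify that the regularity of $V$ and $W$ implicit in this section is enough to make $y\mapsto F_{V,W}[\nu](y)=V(y)+\int_\Y W(y,z)\,d\nu(z)$ of class $C^2$, so that $T$ is genuinely $C^1$ and the coarea formula applies. Beyond that, the corollary is a matter of quoting Theorem~\ref{lemma: nestedness from interaction} and \cite{mccann2018optimal}.
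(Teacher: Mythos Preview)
Your proposal is correct and follows essentially the same route as the paper: both compute $DT(x)=-\bigl(D^2_{yy}c+D^2F_{V,W}[\nu]\bigr)^{-1}D^2_{yx}c$ via the implicit function theorem, use non-degeneracy to conclude $DT$ has full rank so that $\nu=T_\#\mu$ is absolutely continuous, and then invoke generalized nestedness together with \cite{mccann2018optimal} to obtain \eqref{eqn: local unequal MA}. Your version is simply more explicit about the coarea step justifying absolute continuity, which the paper leaves implicit.
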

\begin{proof}
Applying the chain rule to \eqref{eqn: optimal map} yields 
$$
DT(x) = -[D^2_{yy}c(x,T(x))+D^2_{yy}V(T(x))+\int_YD^2_{yy}W(z,T(x))d\nu(z)]^{-1}D^2_{yx}c(x,T(x)) ;
$$
and so non-degeneracy implies that $DT(x)$ is of full rank, so that $\nu =T_\#\mu$ is absolutely continuous.  Generalized nestedness and Theorem 1 in \cite{mccann2018optimal} then yield  equation \eqref{eqn: local unequal MA}.

\end{proof}

\begin{remark}\label{rem: smoothness of nu}
The integro Monge-Ampere operator  appearing in \eqref{eqn: local unequal MA} has regularity controlled by a variety of quantities depending on $c$, $\X$, $\Y$ and $\mu$ (see Theorem 11 in \cite{mccann2018optimal}).  Since the potential $v(y) = -V(y) -\int_YW(y,z)d\nu(z)$ is as smooth as $V$ and $W$ on the support of $\nu$, \eqref{eqn: local unequal MA} then yields regularity estimates on $\bar \nu(y)$.  
\end{remark}

Finally, we note that on the support of $\nu$, we can eliminate $v$ from \eqref{eqn: local unequal MA} to obtain the following partial differential equation for $\bar \nu(y)$:
\begin{equation}\label{eqn: pde for interaction}
\bar \nu(y)=G(y, -DF_{V,W}[ \nu](y), -D^2(F_{V,W}[ \nu])(y))
\end{equation}
where $F_{V,W}[\nu](y) =V(y) +\int_Y W(y,z)d\nu(z)$ is linear in $\nu$ and $$G(y,p,Q) =\int_{X_=(y,p)} \frac{\det[D_{yy}^2c(x,y)-Q]}{\sqrt{\det[D_{yx}^2c(x,y)D_{xy}^2c(x,y)]}}$$ is the integro Monge-Ampere operator from \cite{mccann2018optimal}. 

Two complications, absent in the congestion case, arise here: first, the operator $F_{V,W}$ depends \emph{non-locally} on $\nu$, and so the PDE \eqref{eqn: pde for interaction} is non-local, even though the model satisfies the generalized nestedness condition, which eliminates potential non-locality arising from the integro Monge-Ampere operator $G$ as in \cite{mccann2018optimal}.  Second, we do not know the  support of $\nu$ in advance, only that it is a connected subset of $\Y$; we therefore cannot impose boundary conditions.  These issues are not artefacts of the unequal dimensional setting; they arise in equal dimensional problems as well.  Since they seem to make solving the problem via the PDE approach challenging, they serve as good motivation for an iteration scheme, adapted from Blanchet-Carlier \cite{blanchet2014remarks} and developed below.
\subsubsection{A fixed point characterization}
Noting that by differentiating with respect to $y$ the optimality condition for \eqref{minInt} we obtain
\begin{equation}
\label{OPtCond}
 D_yc(x,y)+D F_{V,W}[\nu](y)=0 ,
 \end{equation}
 we denote by $B_\nu:\X\to\Y$ the map such that  
 \begin{equation}\label{eqn: iteration map} 
 D_yc(x,B_\nu(x))+D F_{V,W}[\nu](B_\nu(x))=0,
 \end{equation}
which is well defined under conditions \eqref{eqn: boundary max assumption} and \eqref{eqn: uniform convexity assumption}.
Then, the scheme introduced in \cite{blanchet2014remarks} consists in iterating the application defined as  
\begin{equation}
\label{It}
\mathcal B(\nu):=(B_\nu)_\sharp\mu. 
\end{equation}
The following Theorem establishes the existence of a unique fixed point $\nu^\star$ of \eqref{It} which is a solution to \eqref{minInt}.
\begin{theorem}{(The best reply iteration scheme-unequal dimensional case)}
\label{bestReply}
Let $\mu\in\PP(\X)$ and the application $\mathcal B:\PP(\Y)\rightarrow\PP(\Y)$ 
defined in \eqref{It}.
Assume that the transport cost $c(x,y)$ is uniformly convex in $y$, that is $D_{yy}^2 c\geq \eta\id$ with $\eta>0$, $D^2_{xy}c$ has maximal rank and 
$F_{V,W}[\nu]$ satisfies the following hypothesis 
\begin{align}
\label{Hyp1bis}
D^2F_{V,W}[\nu]\geq\lambda\id\;in\; \Y, \lambda>0;&\\
\label{Hyp2bis}
\mathcal H^{m-n}(B^{-1}_\nu(y))\leq M \;\forall y\in \Y, M\in\R;&\\
\label{Hyp4}
JB_{\nu}\geq k>0\;in\;\X;&\\ 
\label{Hyp3bis}
\int_{\Y}  |D F_{V,W}[\nu_1]-D F_{V,W}[\nu_0]|dy\leq C\mathcal{W}_1(\nu_1,\nu_0)&
\end{align}
where $JB_{\nu}$ is the $n-$dimensional Jacobian of $B_{\nu}$
Moreover, let $\mu\in\PP(\X)$ absolutely continuous with respect to Lebesgue and such that $||\mu||_\infty M C<k(\eta+\lambda) $ .\\
Then $\mathcal B$ is a contraction of $(\PP(\Y),\mathcal{W}_1)$, where we denote by $\mathcal{W}_1$ the 1-Wasserstein distance (namely the Optimal Transport problem with the Monge cost) and the unique fixed point $\nu^\star$ is solution to \eqref{minInt}.
\end{theorem}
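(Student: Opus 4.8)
The plan is to prove that $\mathcal B$ is a $\mathcal W_1$-contraction on $(\PP(\bar\Y),\mathcal W_1)$ (complete because $\bar\Y$ is compact), apply Banach's fixed point theorem, and then identify the resulting fixed point with the unique minimizer of \eqref{minInt} via the first order optimality condition. The first ingredient is a pointwise estimate on $x\mapsto|B_{\nu_1}(x)-B_{\nu_0}(x)|$. Given $\nu_0,\nu_1\in\PP(\Y)$, set $y_i=B_{\nu_i}(x)$ and subtract the two instances of \eqref{eqn: iteration map}. Splitting off the $\nu$-dependence by writing $DF_{V,W}[\nu_1](y_1)-DF_{V,W}[\nu_0](y_0)$ as $\bigl(DF_{V,W}[\nu_1](y_1)-DF_{V,W}[\nu_1](y_0)\bigr)+\bigl(DF_{V,W}[\nu_1](y_0)-DF_{V,W}[\nu_0](y_0)\bigr)$, using convexity of $\Y$ (so that $[y_0,y_1]\subseteq\Y$) to express the first differences as integrals of $D^2_{yy}c(x,\cdot)$ and $D^2F_{V,W}[\nu_1](\cdot)$ along the segment, pairing the identity with $y_1-y_0$, and invoking $D^2_{yy}c\ge\eta\,\id$ together with \eqref{Hyp1bis}, one obtains
\[
(\eta+\lambda)\,|B_{\nu_1}(x)-B_{\nu_0}(x)|\ \le\ \bigl|DF_{V,W}[\nu_1](B_{\nu_0}(x))-DF_{V,W}[\nu_0](B_{\nu_0}(x))\bigr|.
\]

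Next I would globalize this. Since $(B_{\nu_0},B_{\nu_1})_\sharp\mu$ is a coupling of $\mathcal B(\nu_0)=(B_{\nu_0})_\sharp\mu$ and $\mathcal B(\nu_1)=(B_{\nu_1})_\sharp\mu$, one has $\mathcal W_1(\mathcal B(\nu_0),\mathcal B(\nu_1))\le\int_\X|B_{\nu_1}-B_{\nu_0}|\,d\mu$; integrating the pointwise bound and changing variables through $(B_{\nu_0})_\sharp\mu=\mathcal B(\nu_0)$ gives $\mathcal W_1(\mathcal B(\nu_0),\mathcal B(\nu_1))\le\frac1{\eta+\lambda}\int_\Y|DF_{V,W}[\nu_1]-DF_{V,W}[\nu_0]|\,d\mathcal B(\nu_0)$. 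The crucial observation is that $\mathcal B(\nu_0)=(B_{\nu_0})_\sharp\mu$ has an $L^\infty$ density bounded by $\|\mu\|_\infty M/k$: by the coarea (area) formula this density at $y$ equals $\int_{B_{\nu_0}^{-1}(y)}\bar\mu(x)/JB_{\nu_0}(x)\,d\mathcal H^{m-n}(x)$, which is controlled by the Jacobian lower bound \eqref{Hyp4} and the fibre bound \eqref{Hyp2bis} (with $B_{\nu_0}$ being $C^1$ by the implicit function theorem applied to \eqref{eqn: iteration map}, using non-degeneracy of $D^2_{xy}c$). Feeding this and \eqref{Hyp3bis} into the previous display,
\[
\mathcal W_1(\mathcal B(\nu_0),\mathcal B(\nu_1))\ \le\ \frac{\|\mu\|_\infty M}{k(\eta+\lambda)}\int_\Y|DF_{V,W}[\nu_1]-DF_{V,W}[\nu_0]|\,dy\ \le\ \frac{\|\mu\|_\infty MC}{k(\eta+\lambda)}\,\mathcal W_1(\nu_1,\nu_0),
\]
and the hypothesis $\|\mu\|_\infty MC<k(\eta+\lambda)$ makes the factor strictly less than one, so Banach's theorem yields a unique $\nu^\star\in\PP(\bar\Y)$ with $\mathcal B(\nu^\star)=\nu^\star$.

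To conclude, I would identify $\nu^\star$ with the minimizer of \eqref{minInt}. A minimizer exists by weak-$*$ compactness of $\PP(\bar\Y)$ and continuity of $\nu\mapsto\mathcal T_c(\mu,\nu)+\int_\Y V\,d\nu+\frac12\int_\Y\int_\Y W\,d\nu\,d\nu$. For any minimizer $\nu$, the first order analysis carried out in the proof of Theorem \ref{lemma: nestedness from interaction} shows that the optimal transport map from $\mu$ to $\nu$ is the map sending $x$ to the unique solution $y$ of \eqref{eqn: optimal map}, which is exactly $B_\nu(x)$; hence $\nu=(B_\nu)_\sharp\mu=\mathcal B(\nu)$, i.e.\ every minimizer is a fixed point of $\mathcal B$. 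Uniqueness of the fixed point then forces the minimizer to be unique and equal to $\nu^\star$, which is the assertion.

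The step I expect to be the main obstacle is the $L^\infty$ bound on the density of $\mathcal B(\nu_0)$: one must carefully justify the coarea formula for the implicitly defined map $B_{\nu_0}$ and genuinely use the fibre and Jacobian hypotheses; the rest of the contraction estimate is bookkeeping, the hypotheses \eqref{Hyp1bis}--\eqref{Hyp3bis} having been arranged precisely so that the constant comes out as $\|\mu\|_\infty MC/(k(\eta+\lambda))$. The only conceptual point is the ``fixed point $=$ minimizer'' identification, which is immediate once one recalls the rigidity of the optimal map already established in Theorem \ref{lemma: nestedness from interaction}.
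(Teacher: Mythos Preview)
Your argument is correct and follows essentially the same route as the paper: the same uniform-convexity pointwise bound on $|B_{\nu_1}(x)-B_{\nu_0}(x)|$, the same coarea/density estimate $\|\mathcal B(\nu)\|_\infty\le\|\mu\|_\infty M/k$, and the same contraction constant, the only cosmetic difference being that you push forward through $B_{\nu_0}$ rather than $B_{\nu_1}$. You in fact go slightly further than the paper's written proof by spelling out the identification of the fixed point with the minimizer of \eqref{minInt} via Theorem~\ref{lemma: nestedness from interaction}, which the paper's proof leaves implicit.
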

\begin{proof}

Given $\nu_{0},\nu_1\in\PP(\Y)$ and $y_i(x):=B_{\nu_i}(x)$, it follows from uniform convexity in $y$ of $c$ that
\[ \big( D_y c(x,y_1)-D_y c(x,y_0)\big)\cdot (y_1-y_0) \geq \eta |y_1-y_0|^2. \]
Then, by using the definition of $y_i$ we have
\begin{equation*}
\begin{split}
&\eta|y_1-y_0|^2\leq (y_1-y_0)\cdot\Big( D_y c(x,y_1)-D_y c(x,y_0)\Big)\\ &= (y_1-y_0)\cdot\Big( DF_{V,W}[\nu_0](y_0)-DF_{V,W}[\nu_1](y_1)\Big) \\
&=  (y_1-y_0)\cdot\Big ( DF_{V,W}[\nu_0](y_0)-DF_{V,W}[\nu_0](y_1) + DF_{V,W}[\nu_0](y_1)-DF_{V,W}[\nu_1](y_1)\Big).
\end{split}
\end{equation*}
Applying \eqref{Hyp1bis} we have
\begin{equation*}
\eta|y_1-y_0|^2\leq  \Big(-\lambda |y_1-y_0|^2+|y_1-y_0||DF_{V,W}[\nu_0](y_1)-DF_{V,W}[\nu_1](y_1)|\Big)
\end{equation*}
 and so
 \begin{equation*}
|y_1-y_0|\leq \frac{1}{\eta+\lambda}| DF_{V,W}[\nu_0](y_1)-DF_{V,W}[\nu_1](y_1)|.
\end{equation*}
Thus, now
\begin{align*}
&\mathcal{W}_1(\mathcal B(\nu_1),\mathcal B(\nu_0))\leq \int_{\X}  |y_1(x)-y_0(x)|d\mu(x)\\
& \leq \frac{1}{\eta+\lambda}\int_{\Y}| DF_{V,W}[\nu_0](y_1)-DF_{V,W}[\nu_1](y_1)|d\mathcal B(\nu_1).
\end{align*}
$\mathcal B(\nu_1)$ has a density with respect to Lebesgue given by the co-area formula
\begin{equation}
\label{CoA}
  \mathcal B(\nu_1)(y)=\int_{B^{-1}_{\nu_1}(y)} \dfrac{\bar \mu(x)}{JB_{\nu_1}}d\mathcal{H}^{m-n}(x), 
 \end{equation} 
where $JB_{\nu_1}$ denotes the $n-$dimensional Jacobian of $B_{\nu_1}$.
Notice that our uniform convexity assumptions, together with the implicit functions theorem imply differentiability of $B_{\nu}$, and that differentiating \eqref{eqn: iteration map}
 we have
\[  \Big (  D^2F_{V,W}[\nu](B_{\nu}(x))+D^2_{yy}c(x,B_{\nu}(x))  \Big)DB_{\nu}(x)=-D^2_{xy}c(x,B_{\nu}(x)) \]
and, since  the right hand side has rank $n$ (by hypothesis), we conclude both factors on the left must have rank $n$. This actually implies that $JB_{\nu_1}>0$ and the co-area formula holds.  
Now by \eqref{CoA} and \cref{Hyp2bis,Hyp3bis,Hyp4} we obtain
\begin{align*}
&\mathcal{W}_1(\mathcal B(\nu_1),\mathcal B(\nu_0))\\&\leq \frac{1}{\eta+\lambda}\int_{\Y}| DF_{V,W}[\nu_0](y_1)-DF_{V,W}[\nu_1](y_1)|\Big ( \int_{B^{-1}_{\nu}(y)}\dfrac{\bar\mu(x)}{JB_\nu}d\mathcal H^{m-n}(x)   \Big)dy\\
&\leq \frac{||\mu||_\infty}{\eta+\lambda}\int_{\Y}| DF_{V,W}[\nu_0](y)-DF_{V,W}[\nu_1](y)|\Big ( \int_{B^{-1}_{\nu}(y)}\dfrac{1}{JB_\nu}d\mathcal H^{m-n}(x)   \Big)dy\\
& \leq \frac{||\mu||_\infty M C}{k(\eta+\lambda)} \mathcal{W}_1(\nu_1,\nu_0).
\end{align*}
Since  $||\mu||_\infty M C< k(\eta+\lambda) $, we can conclude the proof by Banach's fixed point theorem.
\end{proof}
\begin{remark}
One can get rid of hypothesis \cref{Hyp4} by noticing that the Jacobian of $B_{\nu}$ depends on other quantities: $D^2F_{V,W}[\nu](y)$, $D^2_{yy}c(x,y)$ and $D^2_{xy}c(x,y)$.
\end{remark}
\begin{remark}[The equal dimensional case]
When $m=n$, the above proposition is an extension of \cite[Theorem 5.1]{blanchet2014remarks} to the case in which a general cost function is involved.\\
First of all let us remark if $c(x,y)$ is double twisted, that is  $x\mapsto D_yc(x,y)$ is injective, then from $D_yc(x,y)=p$ one can deduce $x$ uniquely from  $y$ and $p$, in which case we can write $x=c\operatorname{-exp}_y(p):=D_yc(\cdot,y)^{-1}(p)$.
Thus by using the optimality condition \eqref{OPtCond} and the  injectivity of  $x\mapsto D_yc(x,y)$ we have 
$$ x=c\operatorname{-exp}_y(-DF_{V,W}[\nu])(y). $$
It is now clear that the map $B_\nu:\X\rightarrow\Y$ we have defined above is given by
\[ B_\nu(x):=(c\operatorname{-exp}_y(-DF_{V,W}[\nu]))^{-1}(x).\]
\eqref{CoA} can now be  replaced by the change of variable  formula
\[ \mathcal B(\nu_1)(y)=\mu(B_{\nu_1}^{-1}(y))\det(B_{\nu_1}^{-1}(y)) \]
and this implies that \cref{Hyp2bis} holds with $M=1$.
In the case of quadratic cost this coincides with the hypothesis in  \cite[Theorem 5.1]{blanchet2014remarks}.
We highlight  the map $B_\nu(x)$ is actually not explicit or simple to compute. However in the special case in which $c(x,y)=h(x-y)$, with $h$ is strictly convex, the map $B_\nu(x)$ takes the form
$$B_\nu(x)=(\id+Dh^{-1}(-DF_{V,W}[\nu]))^{-1}(x).  $$ 
So far we have assumed that the cost is double twisted, but we can avoid this assumption and notice that \eqref{CoA} still holds. In this case we have that
\[  \mathcal B(\nu_1)(y)=\int_{B^{-1}_{\nu_1}(y)} \dfrac{\mu(x)}{JB_{\nu_1}}d\mathcal{H}^{0}(x),  \]
and, since $\mathcal H^{0}$ is simply the counting measure (under the non-degeneracy condition), this implies that \eqref{Hyp2bis} can be interpreted as a bound on the number of points in the pre-image of $B_{\nu_1}$.
\end{remark}
\subsection{Minimizing over the high dimensional marginal with one dimensional target}
We now consider problems where the high dimensional measure $\mu$ is allowed to vary, restricting to the one dimensional target setting, $n=1$; first we study the case where the target measure $\nu$ is fixed.
\subsubsection{Minimizations with a fixed target measure}\label{subsect: min high dim}
Consider fixing $\nu$ and minimizing $\mu \mapsto \mathcal T_c(\mu, \nu) +\mathcal{G}(\mu)$, where $\mathcal{G}(\mu) = \int_Xg(\bar \mu(x))dx$ is a congestion type functional, with $g$ satisfying the conditions on $f$ in subsection \ref{subsect: density bounds}, 
the domains $\X \subseteq \mathbb{R}^m$ and $\Y \subseteq \mathbb{R}$ is one dimensional.  Combining Corollary \ref{cor: nestedness from bounds on mu} and Proposition \ref{prop: upper bound on density from conjestion}, we immediately obtain the following.

\begin{theorem}
\label{high_dimensional_congestion}
	Assume that for all $y_0 <y <y_1$, 
	and \\ $x \in X_\geq(y_1,k_{max}(y_0,y_1,k(y_0)))\setminus X_\geq(y_0,k(y_0))$ we have
	$$
 (g')^{-1}(K_{M_c|x |}(1) + M_c|x|)< \frac{\bar\nu(y)(y_1-y_0)}{D^{min}_{vol}(y_0,y_1,k(y_0))}.
	$$
	where $M_c$ is a Lipschitz constant for $ y \mapsto c(x,y)$.
	Then the model $(c, \mu, \nu)$ is nested for any minimizer $\mu$.
\end{theorem}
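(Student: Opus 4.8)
The plan is to obtain the statement by combining Corollary~\ref{cor: nestedness from bounds on mu} with the density bound of Proposition~\ref{prop: upper bound on density from conjestion}, this time applied to the high-dimensional marginal; this is exactly the route indicated by the sentence preceding the theorem. As recorded in the remark following Proposition~\ref{prop: upper bound on density from conjestion}, its proof uses neither the particular roles of $\mu$ and $\nu$ nor the values of $m$ and $n$, so I would first transcribe it with the two marginals interchanged: for any minimizer $\mu$ of $\mu\mapsto\mathcal T_c(\mu,\nu)+\int_\X g(\bar\mu(x))\,dx$ with $g$ of the congestion type from subsection~\ref{subsect: density bounds}, one gets that $\mu$ is absolutely continuous with $\bar\mu>0$ on $\X$, that the optimality condition $u(x)+g'(\bar\mu(x))=C$ holds a.e.\ (with $u$ the Kantorovich potential on the $\X$-side of the transport between $\mu$ and $\nu$), and, after inverting $g'$ and inserting the Lipschitz bound on $u$,
\[
\bar\mu(x)\le (g')^{-1}\!\big(K_{M_c|x|}(1)+M_c|x|\big),
\]
where $K_v$ now denotes the inverse of $z\mapsto\int_\X (g')^{-1}(z-v(x))\,dx$ and $M_c$ is the Lipschitz constant of $u$ (a Lipschitz constant for $c$ in its $\X$-variable). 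As emphasised for Proposition~\ref{prop: upper bound on density from conjestion}, this bound is insensitive to the dimensions; the one-dimensionality of $\Y$ enters only when we feed it into Corollary~\ref{cor: nestedness from bounds on mu}.

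Next I would plug this bound into Corollary~\ref{cor: nestedness from bounds on mu}. Set $S=S(y_0,y_1):=X_\geq(y_1,k_{max}(y_0,y_1,k(y_0)))\setminus X_\geq(y_0,k(y_0))$. Since the pointwise bound holds a.e.\ and its right-hand side is continuous in $x$, we have $||\bar\mu||_{L^{\infty}(S)}\le \sup_{x\in \bar S}(g')^{-1}(K_{M_c|x|}(1)+M_c|x|)$, the supremum being attained because $\bar S$ is compact. The hypothesis of the theorem states precisely that, for all $y_0<y<y_1$ and all $x\in S$,
\[
(g')^{-1}\!\big(K_{M_c|x|}(1)+M_c|x|\big)\,\frac{D^{min}_{vol}(y_0,y_1,k(y_0))}{y_1-y_0}<\bar\nu(y);
\]
taking the supremum over $x\in \bar S$ gives $||\bar\mu||_{L^{\infty}(S)}\,\dfrac{D^{min}_{vol}(y_0,y_1,k(y_0))}{y_1-y_0}-\bar\nu(y)<0$ for every such triple $(y_0,y,y_1)$, which is exactly the hypothesis of Corollary~\ref{cor: nestedness from bounds on mu}. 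That corollary then yields that $(c,\mu,\nu)$ is nested, completing the proof.

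The only substantive step is the first one: checking that the proof of Proposition~\ref{prop: upper bound on density from conjestion} genuinely survives the interchange of $\mu$ and $\nu$. This reduces to the symmetry of three ingredients — the monotonicity argument pinning down the constant $C$, the strict positivity of the optimal density from \cite{abgcmor}, and the Lipschitz estimate on the Kantorovich potential from \cite{McCann2001} — together with keeping track that the potential whose Lipschitz norm controls $K_v$ is the one on $\X$. A minor secondary point is the passage from the pointwise strict inequality in the hypothesis to the uniform strict negativity required by Corollary~\ref{cor: nestedness from bounds on mu}; this is handled by continuity of the bounding function and compactness of $\bar S$, so the supremum over $x$ is attained and strictness is preserved. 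With these two observations, the two cited results chain together directly to give the conclusion.
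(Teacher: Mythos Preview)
Your proposal is correct and follows exactly the route the paper takes: the paper's entire proof is the single sentence ``Combining Corollary~\ref{cor: nestedness from bounds on mu} and Proposition~\ref{prop: upper bound on density from conjestion}, we immediately obtain the following,'' and you have spelled out precisely that combination. Your added care in noting that the relevant Lipschitz constant after swapping roles is that of $x\mapsto c(x,y)$ (rather than $y\mapsto c(x,y)$ as written in the statement) is a useful clarification of what appears to be a slip in the paper.
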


Note that the equality $\bar \mu(x) = (g')^{-1}(C-u(x))$, and the general fact that the potential $u(x)$ is Lipschitz implies that the optimal marginal $\bar \mu(x)$ is Lipschitz as well.  This allows one to use Theorem 7.1 in \cite{PassM2one} to obtain interior $C^{2,1}$ estimates on $v= u^c$.  It is not clear to us whether this can be bootstrapped to obtain higher regularity.

\subsubsection{Double minimizations}
Consider now the problem where neither measure is fixed, and where  $\mathcal{G}(\mu)=\int_Xg(\bar\mu(x))dx$ and $F_{V,W}$  have the forms in subsections \ref{subsect: density bounds} and \ref{subsect: interaction and potential}, respectively.  That is, consider the minimization problem
\begin{equation}\label{eqn: double min}
\inf\left\{\mathcal T_c(\mu,\nu)+ \mathcal G(\mu) +\int_Y F_{V,W}[\nu]d\nu(y)\;:\;(\mu,\nu)\in\PP(\bar \X)\times\PP(\bar \Y)\right\}.
\end{equation}
The results established above can be used to prove the following.

\begin{theorem}
\label{double_minimization}
Adopt the assumptions on $\X, \Y, c, V$ and $W$ from the previous section, and assume that $g$ satisfies the conditions in subsection \ref{subsect: density bounds}.  Then, whenever  $(\mu,\nu)$ minimizes \eqref{eqn: double min},
\begin{enumerate}
	\item $(c, \mu, \nu)$ is nested.
	\item $\mu$ is absolutely continuous with an everywhere positive density.
	\item The optimal map $T$ is two degrees less smooth than $c,V$ and $W$, while the Kantorovich potential $u(x)$ and  density $\bar \mu(x)$ are  one degree less smooth than $c,V$ and $W$.  
	\item $\nu$ is absolutely continuous.
\end{enumerate}	
\end{theorem}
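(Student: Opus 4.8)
The plan is to use that any joint minimizer $(\mu,\nu)$ of \eqref{eqn: double min} is, in particular, a minimizer of each one-variable subproblem obtained by freezing the other marginal, and then to feed those subproblems into the results already established. Since $\mathcal G(\mu)$ is additive, freezing $\nu$ leaves $\mu$ minimizing $\mu\mapsto \mathcal T_c(\mu,\nu)+\mathcal G(\mu)$ over $\PP(\bar\X)$, while freezing $\mu$ leaves $\nu$ minimizing precisely the interaction-and-potential problem \eqref{minInt}. The four assertions then split between these two subproblems, with one cross-link: item 4 will need the output of item 2.

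From the $\mu$-subproblem: this is a congestion minimization of exactly the form handled by Proposition \ref{prop: upper bound on density from conjestion}, with the two marginals interchanged, which is legitimate since the dimensions play no role there (see the remark following that proposition). Hence $\mu$ is absolutely continuous (otherwise $\mathcal G(\mu)=+\infty$), its density obeys $\bar\mu(x)=(g')^{-1}(C-u(x))$ for the Kantorovich potential $u$ of $\mathcal T_c(\mu,\nu)$ and some constant $C$, and the two-sided bound of Proposition \ref{prop: upper bound on density from conjestion} pins $\bar\mu$ between two strictly positive constants; in particular $\bar\mu>0$ everywhere, giving item 2. Since $u$ is $M_c$-Lipschitz \cite{McCann2001}, $\bar\mu$ is Lipschitz as well; I record this for the regularity claim.

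From the $\nu$-subproblem: the uniform convexity \eqref{eqn: uniform convexity assumption} and outward-gradient \eqref{eqn: boundary max assumption} hypotheses put us under Theorem \ref{lemma: nestedness from interaction}, so $(c,\mu,\nu)$ satisfies the generalized nestedness condition (item 1) and the optimal map $T$ from $\mu$ to $\nu$ is the unique $y$ solving \eqref{eqn: optimal map}. Now invoke Corollary \ref{cor: absolute continuity of nu}: differentiating \eqref{eqn: optimal map} and using non-degeneracy of $c$ shows $DT(x)$ has full rank $n$, so $\nu=T_\#\mu$ is absolutely continuous and its density satisfies \eqref{eqn: local unequal MA} (item 4). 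Note that this last step genuinely uses that $\mu$ is absolutely continuous, which was produced by the congestion functional $\mathcal G$ in the previous paragraph; the two parts of the theorem are therefore coupled through this point, not independent consequences of the two subproblems.

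It remains to bootstrap the regularity in item 3, and this is the step I expect to require the most care. The starting data are: $u$ is Lipschitz, hence so is $\bar\mu=(g')^{-1}(C-u)$ (assuming $g$ is regular enough that $(g')^{-1}$ is Lipschitz); and on $\spt(\nu)$ the potential $v=u^c$ agrees, up to an additive constant, with $-F_{V,W}[\nu]$, which is as smooth as $V$ and $W$ since $\nu$ enters only through integration against $W$. Feeding this smoothness into the implicit characterization \eqref{eqn: optimal map} of $T$ (via the implicit function theorem, as in Theorem \ref{lemma: nestedness from interaction}), then into $Du(x)=D_xc(x,T(x))$, back into $\bar\mu=(g')^{-1}(C-u)$, and finally into \eqref{eqn: local unequal MA} for $\bar\nu$, while appealing to the regularity theory of \cite{PassM2one} and \cite{mccann2018optimal} exactly as in Remark \ref{rem: bootstrapping} and Remark \ref{rem: smoothness of nu}, propagates regularity up to the stated orders. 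The delicate points will be the bookkeeping of how many derivatives each of $T$, $u$, $\bar\mu$, $\bar\nu$ loses at each turn of the loop, the transversality of $\overline{X_=(y,Dv(y))}$ with $\partial\X$ needed to run the \cite{PassM2one} estimates, and the fact that $v$ is known to be smooth only on the a priori unknown set $\spt(\nu)$ — all manageable once the absolute continuity and positivity from items 2 and 4 are in hand.
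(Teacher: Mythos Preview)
Your proposal is correct and follows essentially the same route as the paper's own proof: both split the joint minimization into the two one-variable subproblems, invoke Proposition~\ref{prop: upper bound on density from conjestion} for the absolute continuity and positivity of $\mu$, Theorem~\ref{lemma: nestedness from interaction} for nestedness and smoothness of $T$, the first-order condition $Du(x)=D_xc(x,T(x))$ together with $u(x)+g'(\bar\mu(x))=C$ for the regularity of $u$ and $\bar\mu$, and Corollary~\ref{cor: absolute continuity of nu} for the absolute continuity of $\nu$. Your write-up is in fact more explicit than the paper's about the cross-link (item~4 needing item~2) and about the potential pitfalls in the regularity bootstrap; the paper's proof is terser on these points but structurally identical.
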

\begin{proof}
  Absolute continuity of $\mu$ follows immediately from the conditions on $g$, while Proposition \ref{prop: upper bound on density from conjestion} ensures 
   that the density is everywhere positive.   Nestedness and the smoothness of the optimal map follow directly from Theorem \ref{lemma: nestedness from interaction}.  The first order condition
$$
Du(x) = D_xc(x,T(x))
$$
then means that $u$ is one degree less smooth than $c, V$ and $W$, as desired.  Via the equality $u(x) +g'(\bar \mu(x))=C$, this means that $u$ is equally smooth. 

Corollary \ref{cor: absolute continuity of nu} 
then applies to yield regularity of $\nu$.  
\end{proof}
As noted in Remark \ref{rem: smoothness of nu}, higher regularity of $\bar \nu$ can be obtained, depending on $c$, $\X$, $\Y$, $V$ and $W$.

\section{Hedonic pricing problems}
In this section, we study the hedonic pricing problem found in \cite{Ekeland05} and \cite{ChiapporiMcCannNesheim10}; economically, this problem involves matching distributions $\mu_1$ and $\mu_2$ of buyers and sellers on spaces $X_1 \subseteq \mathbb{R}^{m_1}$ and $X_2 \subseteq \mathbb{R}^{m_2}$ (both assumed bounded and open), with $m_1,m_2 \geq 1$, according to their preferences for goods in a space $\Y$ (which we will assume is one dimensional).  Mathematically, this amounts to taking $\mathcal{F}(\nu)$ to be the optimal transport distance to another fixed measure in \eqref{minPbNu} \cite{Ekeland05}\cite{ChiapporiMcCannNesheim10}.    We therefore seek to minimize: 

\begin{equation}\label{eqn: hedonic problem}
\min_{\nu \in P(\bar \Y)}\mathcal{T}_{c_1}(\mu_1,\nu)+\mathcal{T}_{c_2}(\mu_2,\nu),
\end{equation}
where the $\mu_i \in P(X_i)$ are absolutely continuous probability measures on the $X_i$ and $\Y \subseteq \mathbb{R}$.  Each $\mathcal{T}_{c_i}$ represents the  optimal transport distance \eqref{eqn: ot problem} between $\mu_i$ and $\nu$ with respect to a $C^2$, non-degenerate cost function $c_i(x_i,y)$.  We attempt to construct a solution by adapting the construction for the straight optimal transport problem in \cite{PassM2one} as follows:

Fix $y$.  For each $M \in [0,1]$, choose the unique $k_i=k_i(y,M)$ such that $\mu_i(X_\geq^i(y,k_i)) =M$, where 
$$
X_\geq^i(y,k_i) :=\{x_i \in X_i: D_yc_i(x_i,y) \geq k_i\};
$$
we adopt similar notation for the level sets $X_=^i(y,k_i)$.
Now consider the function $M \mapsto k_1(y,M)+k_2(y,M)$. The map is continuous and strictly decreasing.  

\begin{lemma}
	Assume $y$ is in the interior of $\Y$ and $y \in argmin (c_1(x_1,y) +c_2(x_2,y))$ for some $(x_1,x_2) \in X_1 \times X_2$. Then  the mapping $M \mapsto k_1(y,M)+k_2(y,M)$ has a unique $0$. 
\end{lemma}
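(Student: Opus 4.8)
The plan is to exploit the monotonicity already in hand together with a sign change of the auxiliary function $\phi(M):=k_1(y,M)+k_2(y,M)$ across $[0,1]$, the sign change being supplied by the first order optimality condition hidden in the hypothesis on $y$. Uniqueness then comes for free: a strictly decreasing function vanishes at most once, so the real content is existence.

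First I would pin down the endpoint values of the $k_i$. Set $\bar k_i:=\max_{x\in\bar X_i}D_yc_i(x,y)$ and $\underline k_i:=\min_{x\in\bar X_i}D_yc_i(x,y)$, both finite since $c_i\in C^2$ and $\bar X_i$ is compact. Continuity and strict monotonicity of $M\mapsto k_i(y,M)$ on $[0,1]$ (which hold factor-by-factor for the same reason the sum $M\mapsto k_1(y,M)+k_2(y,M)$ was already seen to be continuous and strictly decreasing, using that non-degeneracy of $c_i$ prevents $D_yc_i(\cdot,y)$ from being constant on an open set) say precisely that $k_i(y,\cdot)$ is a decreasing homeomorphism of $[0,1]$ onto $[\underline k_i,\bar k_i]$; in particular $k_i(y,0)=\bar k_i$ and $k_i(y,1)=\underline k_i$, so that $\phi(0)=\bar k_1+\bar k_2$ and $\phi(1)=\underline k_1+\underline k_2$. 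Now I bring in the hypothesis: since $y$ lies in the interior of $\Y\subseteq\R$ and minimizes $y'\mapsto c_1(x_1,y')+c_2(x_2,y')$ for the given pair $(x_1,x_2)\in X_1\times X_2$, the first order condition gives
\[ D_yc_1(x_1,y)+D_yc_2(x_2,y)=0 . \]
As $x_i\in X_i\subseteq\bar X_i$ we have $\underline k_i\leq D_yc_i(x_i,y)\leq\bar k_i$ for $i=1,2$, hence
\[ \phi(0)=\bar k_1+\bar k_2\;\geq\;D_yc_1(x_1,y)+D_yc_2(x_2,y)=0\;\geq\;\underline k_1+\underline k_2=\phi(1). \]
Continuity of $\phi$ and the intermediate value theorem then produce $M^\star\in[0,1]$ with $\phi(M^\star)=0$, and strict monotonicity of $\phi$ makes it the only such value.

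The point requiring genuine care is the identification $k_i(y,0)=\bar k_i$, $k_i(y,1)=\underline k_i$ — equivalently, that as $M\downarrow 0$ (resp.\ $M\uparrow 1$) the superlevel set $X_\geq^i(y,k_i(y,M))$ sheds all of its $\mu_i$-mass only in the limit where its height $k_i$ reaches $\max_{\bar X_i}D_yc_i(\cdot,y)$ (resp.\ the minimum). This relies on absolute continuity of $\mu_i$ together with the non-degeneracy of $c_i$ (and on $\mu_i$ charging nonempty open subsets of $X_i$, which is the same regularity that makes $k_i(y,\cdot)$ well defined in the first place); everything else reduces to the elementary one-variable calculus observation above. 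If one prefers not to commit to the exact endpoint values, the same inequality run with $\limsup_{M\to 0^+}\phi(M)$ and $\liminf_{M\to 1^-}\phi(M)$ in place of $\phi(0)$ and $\phi(1)$ still straddles $0$ and yields the conclusion.
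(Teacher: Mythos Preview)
Your proof is correct and follows essentially the same approach as the paper: identify the endpoint values $k_i(y,0)=\max_{\bar X_i}D_yc_i(\cdot,y)$ and $k_i(y,1)=\min_{\bar X_i}D_yc_i(\cdot,y)$, use the first order condition from the argmin hypothesis to see that $0$ lies between $\phi(0)$ and $\phi(1)$, then invoke the intermediate value theorem and strict monotonicity. The paper phrases the middle step as ``$0$ lies in the range of $(x_1,x_2)\mapsto D_y(c_1(x_1,y)+c_2(x_2,y))$, which coincides with $[\phi(1),\phi(0)]$,'' but this is the same argument.
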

\begin{proof}
	The argmin condition implies that $0 =D_y(c_1(x_1,y) +c_2(x_2,y))$, so that $0$ is in the range of the mapping $(x_1,x_2) \mapsto D_y(c_1(x_1,y) +c_2(x_2,y))$.\\
	
Now, note that $k_i(y,0) =\max_{x_i \in \bar \X}D_yc_i(x_i,y)$ while $k_i(y,1) =\min_{x_i \in \bar \X}D_yc_i(x_i,y)$.  The continuous mapping $M \mapsto k_1(y,M)+k_2(y,M)$ then maps the interval $[0,1]$ onto the interval $[\underline \alpha, \overline \alpha]$, where $\underline \alpha =\min_{x_1 \in \bar \X}D_yc_1(x_1,y) +\min_{x_2 \in \bar \X}D_yc_2(x_2,y)$ and $\overline \alpha =\max_{x_1 \in \bar \X}D_yc_1(x_1,y) +\max_{x_2 \in \bar \X}D_yc_2(x_2,y)$.

	 This range $[\underline \alpha, \overline \alpha]$ coincides with the range of $(x_1,x_2) \mapsto D_y(c_1(x_1,y) +c_2(x_2,y))$; since $0$ is in the latter, it must also be in the former.  That is, there is an $M$ such that $k_1(y,M)+k_2(y,M)=0$.  By strict monotonicty, this $M$ is unique, completing the proof. 
\end{proof}
Denote the zero from the preceding Lemma by $M(y)$.   We say the problem \eqref{eqn: hedonic problem} is \emph{hedonically nested} if 
\begin{equation}\label{eqn: hedonic nestedness}
X^i_\geq (y,k_i(y,M(y))) \subseteq X^i_> (\bar y,k_i(\bar y,M(\bar y))) 
\end{equation}
for $i=1,2$, whenever $y,\bar y \in \Y$ with $y < \bar y$. As we show below, this is equivalent to $M$ being the cumulative distribution function of a probability measure $\nu$ \textit{and}  $(c_i,\mu_i,\nu)$ being 
nested for $i=1$ and $2$.
\begin{theorem}
\label{hedonical_nestedness}
	The problem is hedonically nested if and only if $M(y)$ is the  cumulative distribution function of some probability measure $\nu$ and $(c_i,\mu_i,\nu)$ 
	is nested for $i=1,2$.  In this case, $\nu$ is optimal in \eqref{eqn: hedonic problem}.
\end{theorem}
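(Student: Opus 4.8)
The plan is to exploit one identity: \emph{when $M(\cdot)$ is the cumulative distribution function of a probability measure $\nu$}, the level $k_i(y,M(y))$ coincides with the splitting level for the pair $(\mu_i,\nu)$ defined by \eqref{eqn: def of k}, because both are characterised by the single equation $\mu_i\big(X_{\geq}^{i}(y,\cdot)\big)=\nu\big((-\infty,y]\big)=M(y)$, and this level is unique since $c_i$ is non-degenerate (the relevant level set of $D_yc_i(\cdot,y)$ is a $C^1$ hypersurface, hence $\mu_i$-null). Granting this, the hedonic nestedness condition \eqref{eqn: hedonic nestedness} becomes, verbatim, the one dimensional nestedness condition \eqref{eqn: 1-d nestedness} for $(c_i,\mu_i,\nu)$, $i=1,2$, simultaneously --- at least on pairs $y<\bar y$ with $\nu([y,\bar y])>0$. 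I would therefore split the proof into: (i) showing hedonic nestedness forces $M$ to be such a cumulative distribution function; (ii) transferring between the two forms of the nestedness condition; and (iii) deducing optimality from the transport potentials.

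For the forward implication, I would first take $\mu_i$-measure across the chain $X_{\geq}^{i}(y,k_i(y,M(y)))\subseteq X_{>}^{i}(\bar y,k_i(\bar y,M(\bar y)))\subseteq X_{\geq}^{i}(\bar y,k_i(\bar y,M(\bar y)))$, which gives $M(y)\le M(\bar y)$ for $y<\bar y$; thus $M$ is non-decreasing on $\Y$, and (passing to its right-continuous version) $dM$ is a non-negative Borel measure $\nu$ on $\bar\Y$. I would then show $\nu$ has total mass $1$: as $y$ runs to the endpoints of $\Y$ the super-level sets $X_{\geq}^{i}(y,k_i(y,M(y)))$ shrink to a $\mu_i$-null set and, respectively, exhaust $X_i$ up to a $\mu_i$-null set --- here the non-degeneracy of $c_i$ together with the argmin normalisation built into the definition of $M(\cdot)$ play the role of the boundary conditions \eqref{eqn: boundary conditions}. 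With $M$ now the cumulative distribution function of $\nu$, the identity above turns \eqref{eqn: hedonic nestedness}, restricted to pairs of positive $\nu$-mass, into \eqref{eqn: 1-d nestedness} for each $(c_i,\mu_i,\nu)$, and Proposition \ref{nested} yields generalized nestedness of both models.

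For the converse, assuming $M$ is the cumulative distribution function of $\nu$ and $(c_i,\mu_i,\nu)$ is nested for $i=1,2$, the identity immediately yields \eqref{eqn: hedonic nestedness} for every pair with $\nu([y,\bar y])>0$; the remaining work is to recover it for pairs with $\nu([y,\bar y])=0$. On such an interval $M\equiv m$ is constant, so $\mu_i\big(X_{\geq}^{i}(y,k_i(y,m))\big)\equiv m$, and I would argue that the foliation $y\mapsto X_{\geq}^{i}(y,k_i(y,m))$ stays \emph{strictly} nested there, by combining the ordinary nestedness inclusions valid up to the two ends of the interval with continuity of $y\mapsto k_i(y,m)$ and a limiting argument. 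This is the step I expect to be the main obstacle --- passing from the ``$\nu([y,\bar y])>0$'' form of ordinary nestedness to the ``all $y<\bar y$'' form demanded by hedonic nestedness, i.e.\ showing the level-set foliation stays strictly ordered across the gaps of $\spt\nu$.

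Finally, for optimality I would use duality. Since each $(c_i,\mu_i,\nu)$ is nested, \cite[Theorem 4]{PassM2one} identifies the Kantorovich potential of $\mathcal{T}_{c_i}(\mu_i,\nu)$ as $v_i(y)=\int_{\alpha}^{y}k_i(t,M(t))\,dt$ (with $\alpha$ the left endpoint of $\bar\Y$), together with its $c_i$-transform $u_i=v_i^{c_i}$, so that $\int_{X_i}u_i\,d\mu_i+\int_{\Y}v_i\,d\nu=\mathcal{T}_{c_i}(\mu_i,\nu)$. By the very definition of $M(t)$ as the zero of $M\mapsto k_1(t,M)+k_2(t,M)$, the integrand $k_1(t,M(t))+k_2(t,M(t))$ vanishes identically, hence $v_1+v_2\equiv 0$ on $\bar\Y$. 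Since $(u_i,v_i)$ is admissible in the dual problem for \emph{any} second marginal, weak duality gives, for every $\nu'\in\PP(\bar\Y)$,
\[
\mathcal{T}_{c_1}(\mu_1,\nu')+\mathcal{T}_{c_2}(\mu_2,\nu')\ \geq\ \int_{X_1}u_1\,d\mu_1+\int_{X_2}u_2\,d\mu_2+\int_{\Y}(v_1+v_2)\,d\nu'=\int_{X_1}u_1\,d\mu_1+\int_{X_2}u_2\,d\mu_2,
\]
with equality at $\nu'=\nu$ by strong duality for the nested problems; hence $\nu$ minimizes \eqref{eqn: hedonic problem}. In summary, the monotonicity of $M$ and this duality computation are routine, and the real difficulty lies in step (ii).
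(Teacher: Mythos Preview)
Your approach is essentially the same as the paper's: monotonicity of $M$ from taking $\mu_i$-measure across the inclusion; identification of $k_i(y,M(y))$ with the splitting level from \eqref{eqn: def of k}; and optimality via $v_1+v_2\equiv\text{const}$ using $v_i'(y)=k_i(y,M(y))$ (the paper phrases this last step as the optimality condition from \cite{Ekeland05}, but it amounts to the same duality computation you wrote out).

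Two points of comparison are worth noting. First, you spend effort on showing $\nu$ has total mass $1$; the paper simply asserts that monotonicity of $M$ makes it ``the cdf of a probability measure'' without further comment, so you are being more careful here. Second, the issue you flag as the ``main obstacle'' --- recovering \eqref{eqn: hedonic nestedness} on intervals with $\nu([y,\bar y])=0$ from \eqref{eqn: 1-d nestedness}, which only asserts the inclusion when $\nu([y_0,y_1])>0$ --- is not addressed in the paper at all: the paper writes that ``the nestedness condition \eqref{eqn: 1-d nestedness} then implies that the hedonic nestedness condition \eqref{eqn: hedonic nestedness} holds'' without treating the zero-mass case. So your concern is legitimate, and you are in fact filling a gap the paper leaves open rather than missing something the paper supplies.
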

\begin{proof}
	The hedonic nestedness condition \eqref{eqn: hedonic nestedness} for either $i=1$ or $2$ implies that $M(y) = \mu_i(X^i_\geq (y,k_i(y,M(y)))) \leq \mu_i(X^i_> (\bar y,k_i(\bar y,M(\bar y))) ) =M(\bar y)$.  Therefore
	$y\mapsto M(y)$ is monotone increasing, so that $M$ is indeed the cdf of a probability measure $\nu$; the condition also implies that the models $(c_i,\mu_i,\nu)$ both  satisfy \eqref{eqn: 1-d nestedness}. Conversesly, if $M(y)$ is the cumulative distribution function of $\nu$, it follows immediately that $k_i(y, M(y))$ coincides with the $k(y)$ defined by \eqref{eqn: def of k} and used in the definition of nestedness; the nestedness condition \eqref{eqn: 1-d nestedness} then implies that the hedonic nestednss condition \eqref{eqn: hedonic nestedness} holds.

	Turning to the second assertion, since each $(c_i,\mu_i,\nu)$ satisfies \eqref{eqn: 1-d nestedness}, \cite{PassM2one} implies that the mapping sending each $x_i$ in $X_=(y,k_i(y,M(y)))$ to $y$ is the optimal map between $\mu_i$ and $\nu$ and we have $v_i'(y) =k_i(y,M(y))$, where $v_i$ is the Kantorovich potential.  We then have by construction
	$$
	v_1(y)+v_2(y)=C
	$$
	for all $y$; this is exactly the optimally condition for \eqref{eqn: hedonic problem} (see \cite{Ekeland05}) and implies optimality of $\nu$.

\end{proof}
\begin{remark}
	Note that a similar construction will hold for the matching for teams problem from \cite{carlier2010matching}, where one minimizes $\nu \mapsto \sum_{i=1}^N\mathcal T_{c_i}(\mu_i,\nu)$ over probability measures on $\Y \subseteq \mathbb{R}$.
Notice that in the case in which $c_i(x,y)=\lambda_i|x-y|^2$ with $\lambda_i\geq0$ and $\sum_i\lambda_i=1$,  this problem 
 can be read as an \textit{unequal dimensional} version of the Wasserstein barycenters problem introduced in \cite{Carlier_wasserstein_barycenter}.
\end{remark}
We next note that the nestedness of either one of the $(c_i, \mu_i, \nu)$ (implied, for instance, by hedonic nesting) implies that the solution $\nu$ vanishes at the boundary; in economic terms, this means that neither the lowest nor highest quality goods are exchanged in equilibrium.
\begin{corollary}
	Suppose $\Y =(\underline y,\overline y)$ is an interval. Assume $(c_i, \mu_i, \nu)$ is nested for the optimal $\nu$, for either $i=1$ or $2$, and that the density $\bar \mu_i$ is bounded. 
	Set $\underline k = \max_{x \in \bar \X}D_yc_i(x,\underline y)$ and $\overline k = \min_{x \in \bar \X} D_yc_i(x,\overline y)$.

	 If $\lim_{k \rightarrow \underline k^-}\mathcal H^{m_i-1}(X^i_=(\underline y,k))=0$,  
	 then the optimal density is zero at $\underline y$, $\bar\nu(\underline y) =0$. 
	 
	 Similarly, if $\lim_{k \rightarrow \overline k^+}\mathcal H^{m_i-1}(X^i_=(\overline y,k))=0$, 
	 then the optimal density is zero at $\overline y$, $\bar\nu(\overline y) =0$.
\end{corollary}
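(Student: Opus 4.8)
The plan is to prove the statement at $\underline y$, the one at $\overline y$ being symmetric, and to assume $i$ is the index for which $(c_i,\mu_i,\nu)$ is nested; abbreviate $c=c_i$, $\mu=\mu_i$, $m=m_i$, $X_\ge(y,k)=X^i_\ge(y,k)$ and $X_=(y,k)=X^i_=(y,k)$. By nestedness and \cite{PassM2one}, the Kantorovich potential $v$ for transport between $\mu$ and $\nu$ satisfies $v'(y)=k(y)$, where $k$ is defined by \eqref{eqn: def of k}, so that $\nu([\underline y,y])=\mu\big(X_\ge(y,k(y))\big)$; since $\nu([\underline y,y])=\int_{\underline y}^y\bar\nu$, establishing $\bar\nu(\underline y)=0$ reduces to showing $\mu\big(X_\ge(y,k(y))\big)=o(y-\underline y)$ as $y\to\underline y^+$. (We may assume $\nu$ does not already vanish near $\underline y$, so $\mu(X_\ge(y,k(y)))>0$; otherwise there is nothing to prove.) Exactly as in the derivation of the boundary conditions \eqref{eqn: boundary conditions}, the sets $X_\ge(y,k(y))$ decrease as $y\downarrow\underline y$ by nestedness and have $\mu$-measure tending to $0$, so $k(\underline y):=\lim_{y\to\underline y^+}k(y)$ exists and equals $\underline k$.

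The core is a coarea estimate. Set $L:=\|D^2_{yy}c\|_{C^0(\bar X\times\bar\Y)}$ and $\delta:=\min_{\bar X\times\bar\Y}|D^2_{xy}c|>0$, nonzero by non-degeneracy and compactness. From $D_yc(x,\underline y)\ge D_yc(x,y)-L(y-\underline y)$ one gets $X_\ge(y,k(y))\subseteq X_\ge(\underline y,t(y))$ with $t(y):=k(y)-L(y-\underline y)$, and $t(y)<\underline k$ for $y$ near $\underline y$ (otherwise $X_\ge(\underline y,t(y))$ lies in the Lebesgue-null level set $X_=(\underline y,\underline k)$ and $\nu([\underline y,y])$ vanishes). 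Applying the coarea formula to the submersion $x\mapsto D_yc(x,\underline y)$, whose level sets are empty above $\underline k$,
\[
\nu([\underline y,y])\ \le\ \mu\big(X_\ge(\underline y,t(y))\big)\ =\ \int_{t(y)}^{\underline k}\int_{X_=(\underline y,s)}\frac{\bar\mu(x)}{|D^2_{xy}c(x,\underline y)|}\,d\mathcal{H}^{m-1}(x)\,ds\ \le\ \frac{\|\bar\mu\|_\infty}{\delta}\int_{t(y)}^{\underline k}\mathcal{H}^{m-1}\big(X_=(\underline y,s)\big)\,ds .
\]
Since $t(y)\to\underline k^-$ and, by hypothesis, $\mathcal{H}^{m-1}(X_=(\underline y,s))\to0$ as $s\to\underline k^-$, the averaged quantity $\eta(y):=(\underline k-t(y))^{-1}\int_{t(y)}^{\underline k}\mathcal{H}^{m-1}(X_=(\underline y,s))\,ds$ tends to $0$, and therefore
\[
\frac{\nu([\underline y,y])}{y-\underline y}\ \le\ \frac{\|\bar\mu\|_\infty}{\delta}\left(\frac{\underline k-k(y)}{y-\underline y}+L\right)\eta(y) .
\]

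It now suffices that $k=v'$ be right-Lipschitz at $\underline y$, i.e. $\underline k-k(y)=O(y-\underline y)$ near $\underline y$: combined with $\eta(y)\to0$ this forces $\nu([\underline y,y])=o(y-\underline y)$, hence $\bar\nu(\underline y)=0$. This Lipschitz bound — equivalently, local boundedness of $\bar\nu$ up to $\underline y$ — is the one delicate ingredient, and the step I expect to require the most care: semiconcavity of $v$ (from $c\in C^2$) only gives $v'(y)\le\underline k+O(y-\underline y)$, not the matching lower bound, so one must invoke the regularity theory for nested models (Theorem 7.1 of \cite{PassM2one}, iterated as in Remark \ref{rem: bootstrapping}), the hypothesis $\mathcal{H}^{m-1}(X_=(\underline y,k))\to0$ serving precisely to tame the degenerate tangency of $\overline{X_=(\underline y,\underline k)}$ with $\partial X$ that would otherwise obstruct it. An equivalent shortcut, once $\bar\nu$ is known continuous and $k\in C^1$ up to $\underline y$, is to evaluate \eqref{MAmulti2one} directly at $\underline y$: this yields $\bar\nu(\underline y)=\int_{X_=(\underline y,\underline k)}\frac{D^2_{yy}c(x,\underline y)-k'(\underline y)}{|D^2_{xy}c(x,\underline y)|}\bar\mu(x)\,d\mathcal{H}^{m-1}(x)=0$, a bounded integrand integrated over the $\mathcal{H}^{m-1}$-null set $X_=(\underline y,\underline k)$. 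The $\overline y$ statement is proved the same way, starting from $\nu([y,\overline y])=1-\mu\big(X_\ge(y,k(y))\big)$, using $\overline k=\min_{x\in\bar X}D_yc(x,\overline y)$, and reversing the inequalities.
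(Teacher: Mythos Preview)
Your coarea estimate is correct, and you correctly identify that the whole argument hinges on a lower bound for $v''=k'$ near $\underline y$ (equivalently, the right-Lipschitz bound $\underline k-k(y)=O(y-\underline y)$). The gap is in how you propose to obtain that bound. Invoking Theorem 7.1 of \cite{PassM2one} or the bootstrapping of Remark \ref{rem: bootstrapping} does not work here: those interior regularity results require that $\overline{X_=(y,k(y))}$ meet $\partial X$ transversally, which is precisely what fails at $y=\underline y$ (the level set is tangent to $\partial X$), and the bootstrapping in Remark \ref{rem: bootstrapping} relies on the congestion relation $v+f'(\bar\nu)=C$, which has no analogue in the hedonic problem. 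Your ``shortcut'' is likewise circular: it assumes the very continuity and differentiability one is trying to establish.

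What you are missing is that this Corollary sits inside the hedonic problem, where there is a \emph{second} Kantorovich potential $v_2$ satisfying $v_1(y)+v_2(y)\ge 0$ with equality $\nu$-a.e. This is exactly the mechanism the paper exploits: at points of equality, both $v_1,v_2$ are twice differentiable with $v_1''+v_2''\ge0$, and $c_2$-concavity of $v_2$ gives $v_2''(y)\le D^2_{yy}c_2(x_2,y)$ for $x_2\in\partial^{c_2}v_2(y)$, hence $v_1''(y)\ge -\sup D^2_{yy}c_2>-\infty$. With this lower bound on $k_1'=v_1''$ in hand, the integrand in \eqref{MAmulti2one} is bounded above, giving directly $\bar\nu(y)\le \bar C\,\mathcal H^{m_1-1}(X_=(y,k_1(y)))$, which tends to $0$ as $y\to\underline y^+$ by hypothesis. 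So the missing idea is not more regularity theory for a single nested model, but the hedonic optimality condition linking $v_1$ to the semiconcave $v_2$. Once you insert that lower bound on $v_1''$, either the paper's direct use of \eqref{MAmulti2one} or your coarea inequality (with $\underline k-k(y)\le|C|(y-\underline y)$ now justified) finishes the argument.
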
	
\begin{proof}
	Without loss of generality, we take $i=1$.  We'll prove the claimed result about $\underline y$; the $\overline y$ argument is identical.
	If $\underline y$ is outside the support of $\nu$, the result follows immediately.  If not, 
	note that as $v_1( y)+ v_2( y)\geq 0$ with equality $\nu$ almost everywhere, and the Kantorovich potentials $v_1$ and $v_2$ are semi-concave, then a standard argument implies that each of $v_1$ and $v_2$ are twice differentiable at any point where equality holds, and, at such points, $v''_1(y) +v''_2(y) \geq 0$. It then follows, using the standard fact that the $c$-concave function $v_2$ satisfies $v_2''(y) \leq D^2_{yy}c_2(x,y)$ for any  $x \in \partial ^{c_2}v_2(y)$,  that $v_1''(y) \geq -v''_2(y) \geq -D^2_{yy}c_2(x,y)$, for $x \in \partial ^{c_2}v_2(y)$.  
Since the continuous function $D^2_{yy}c_2$ is bounded from above on $\bar \X \times \bar \Y$, this means that 
	$$
	v''_1 \geq C > -\infty
	$$
	$\nu$ almost everywhere.  Since \eqref{MAmulti2one} also holds $\nu$ almost everywhere, we have
	$$
	\bar \nu(y) \leq \bar C \mathcal H^{m_1-1}(X_=(y,k_1(y))),
	$$
	for an appropriate constant $\bar C$.  Noting that $k_1(y) \rightarrow \underline k$ as $y \rightarrow \underline y^+$ then yields the result for $\underline y$ in the support of $\nu$.  
	
\end{proof}
The condition $\lim_{k \rightarrow \underline k^+}\mathcal H^{m_i-1}(X^i_=(\underline y,k))=0$ heuristically means that the first level set of $x \mapsto (c_i)_y$ that intersects $\bar X_i$ does so in a lower dimensional way.  Since this level curve is tangent to $\partial X_i$, this is generically true.  Note that when $c_i(x,y) =x \cdot \alpha(y)$ for some curve $\alpha: \mathbb{R}\to \mathbb{R}^m$, the level sets are hyperplanes and so strict convexity of $\X$, or in fact the slightly weaker condition that $\partial \X$ has no $m-1$ dimensional facets introduced in \cite{FigalliKimMcCann2011}, suffices.

Below, we will present general, differential conditions on the functions $c_i$ and measures $\mu_i$ which guarantee hedonic nestedness.  First however, we present an example illustrating how the above procedure can be used to construct a solution.

\begin{example}
	
 Match a uniform distribution of consumers on $\X_1 =(0,1)^2$ with goods on $\Y= (-3,3)$ and costs $c_1(x_1,y) =c_1(x_1^1,x_1^2,y)=x_1\cdot ( y^2/2, -y)=-x_1^2y +x_1^1y^2/2$ and sellers, uniformly distributed on $\X_2=(0,1)$ with preferences $c_2(x_2,y)=-x_2y+y^2/2$.

We compute $(c_1)_y=-x_1^2+x_1^1y$; the sets $X^1_=(y,k_1)$ are line segments $x_1^2=yx_1^1-k_1$ with slope $y$ and intercept $-k_1$.  When $y> 0$ and $k_1<0$, the  measure of the super-level set $X^1_\geq(y,k_1)$ is $ y/2-k_1$  if $y-k_1 \leq 1$  and $1-(1+k_1)^2/2y$ if and $y-k_1 \geq 1$.  Equating to $M$ and inverting yields $k_1(y,M)=y/2-M$ and $k_1(y,M)= \sqrt{(1-M)2y}-1$ in these two regions.  (It will turn out that the level curves with either $y\leq0$ or $k_1>0$ will not be involved in the solution.)

On the other hand, $(c_2)_y=y-x_2$ and so the measure of $X^2_\geq(y,k_2)$ is $y-k_2$, leading to $k_2(y,M) =y-M$.  

So for small $M$, the equation $k_1(y,M) +k_2(y,M)=0$ yields $M=3y/4$.  When $y=\frac{4}{5}$, and so $M=\frac{3}{5}$ and $k_1 =\frac{1}{5}$ we transition to the other form of $k_1$, and thus must solve $\sqrt{(1-M)2y}-1+ y-M=0$, which leads to
$$
M=\sqrt{4y-y^2} -1.
$$
Note that we get $M=1$ when $y=2$.  It remains to show that the hedonic nesting conditions \eqref{eqn: hedonic nestedness} hold.  This is trivial for $i=2$, since $X_2$ is one dimensional.  For $i=1$, it suffices to show that the intercept $-k_1$ of the level curves $x_1^2=yx_1^1-k_1$ is increasing in $y$ (note that the slope is clearly monotone increasing). For $y \leq \frac{4}{5}$, we have $-k_1(y,M(y)) =M(y)-y/2 =3y/4-y/2=y/4$, which is clearly monotone.  For $y \geq \frac{4}{5}$, we have $-k_1(y,M(y)) =k_2(y,M(y)) =-\sqrt{4y-y^2} +1 +y$.  The derivative of this function is:

\begin{eqnarray*}
	\frac{d}{dy}k_1(y,M(y))&=&-\frac{2-y}{\sqrt{4y-y^2}} +1\\
	&=&\frac{\sqrt{4y-y^2}-2+y}{\sqrt{4y-y^2}}\\
	&=&\frac{4y-y^2-(2-y)^2}{\sqrt{4y-y^2}(\sqrt{4y-y^2}+(2-y))}\\
	&=&\frac{-2((2-y)^2-2)}{\sqrt{4y-y^2}(\sqrt{4y-y^2}+(2-y))}
\end{eqnarray*}
To ensure this is non-negative, it suffices to show $(2-y)^2-2<0$ on $[4/5,2]$.  That is, $-\sqrt2<2-y <\sqrt2$, or $2-\sqrt2<y <2+\sqrt2$, which is clearly true.
\end{example}
\subsection{Differential conditions ensuring hedonic nestedness}
The result below identifies differential conditions on the cost functions $c_i$ and marginals $\mu_i$ under which the model is hedonically nested.

\begin{lemma}
	Assume that both $c_i$'s are uniformly convex with respect to $y$.  Also assume that for each fixed $\bar x_1 \in X_1$,  $\bar x_2 \in X_2$, and $y\in \Y$ such that  $k_1+k_2=0$ and $\mu_1(X^1_\leq (y,k_1)) = \mu_2(X^2_\leq (y,k_2))$, where $k_i =(c_i)_y(\bar x_i,y)$ for $i=1$ and $2$, we have
	
	\begin{equation}
	\begin{split}
		\int_{X^1_=(y, k_1)}&\left[\frac{D^2_{yy}c_1}{|D^2_{x_1y}c_1|}(x_1,y)\right]\bar \mu_1(x_1)d\mathcal H^{m_1-1}(x_1)\\
		&-\int_{X^2_=(y, k_2)}\left[\frac{D^2_{yy}c_2}{|D^2_{x_2y}c_2|}(x_2,y)\right]\bar \mu_2(x_2)d\mathcal H^{m_2-1}(x_2)\\
		&-D^2_{yy}c_1(\bar x_1, y)\int_{X^1_=(y, k_1)}\left[\frac{1}{|D^2_{x_1y}c_1|}(x_1,y)\right]\bar \mu_1(x_1)d\mathcal H^{m_1-1}(x_1)\\
		&-D^2_{yy}c_1(\bar x_1, y)\int_{X^2_=(y, k_2)}\left[\frac{1}{|D^2_{x_2y}c_2|}(x_2,y)\right]\bar \mu_2(x_2)d\mathcal H^{m_2-1}(x_2)\\
		&<0.\label{eqn: condition for nested hedonic equilibrium}
\end{split}
\end{equation}
Then condition \eqref{eqn: hedonic nestedness} holds for $i=1$.

\end{lemma}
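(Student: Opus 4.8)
The plan is to recognize \eqref{eqn: condition for nested hedonic equilibrium} as exactly the infinitesimal form of hedonic nestedness \eqref{eqn: hedonic nestedness} for $i=1$, and then to upgrade it to the global containment statement by a first‑contact (``first exit'') argument. Write $\kappa_i(y):=k_i(y,M(y))$, so that by the definition of $M(y)$ one has $\kappa_1(y)+\kappa_2(y)\equiv 0$ and, by the definition of the $k_i(y,M)$, $\mu_i(X^i_\geq(y,\kappa_i(y)))=M(y)$ for $i=1,2$. In this notation \eqref{eqn: hedonic nestedness} for $i=1$ asserts precisely that the super‑level sets $y\mapsto X^1_\geq(y,\kappa_1(y))$ are strictly increasing.

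The first step is to compute $\kappa_1'$. Differentiating the identities $\mu_i(X^i_\geq(y,\kappa_i(y)))=M(y)$ with the co‑area formula --- legitimate since non‑degeneracy of $c_i$ makes $D^2_{x_iy}c_i=D_{x_i}(c_i)_y$ a non‑vanishing vector on the smooth level hypersurface $X^i_=(y,\kappa_i(y))$ --- and abbreviating
$$A_i:=\int_{X^i_=(y,\kappa_i(y))}\frac{D^2_{yy}c_i}{|D^2_{x_iy}c_i|}\bar\mu_i\,d\mathcal H^{m_i-1},\qquad B_i:=\int_{X^i_=(y,\kappa_i(y))}\frac{1}{|D^2_{x_iy}c_i|}\bar\mu_i\,d\mathcal H^{m_i-1},$$
one gets $A_i-\kappa_i'(y)B_i=M'(y)$ for $i=1,2$. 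Since $\kappa_2'=-\kappa_1'$, eliminating $M'$ yields $\kappa_1'(y)=(A_1-A_2)/(B_1+B_2)$, where $B_1,B_2>0$ by uniform convexity of the $c_i$ in $y$ together with non‑degeneracy. Now fix $\bar x_1\in X_1$ and put $\psi(t):=(c_1)_y(\bar x_1,t)-\kappa_1(t)$. If $\psi(t)=0$, i.e. $\bar x_1\in X^1_=(t,\kappa_1(t))$, then picking any $\bar x_2\in X^2_=(t,\kappa_2(t))$ makes the hypotheses $k_1+k_2=0$ and $\mu_1(X^1_\leq(t,k_1))=1-M(t)=\mu_2(X^2_\leq(t,k_2))$ of the lemma hold (the level sets themselves being $\mu_i$‑null), so \eqref{eqn: condition for nested hedonic equilibrium} applies and, after substituting the formula for $\kappa_1'$ and multiplying by $B_1+B_2>0$, becomes exactly $\psi'(t)=(c_1)_{yy}(\bar x_1,t)-\kappa_1'(t)>0$. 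Hence $\psi$ can only cross the value $0$ upwards; consequently, if $\bar x_1\in X^1_\geq(y_0,\kappa_1(y_0))$, i.e. $\psi(y_0)\geq 0$, then $\psi(y_1)>0$ for every $y_1>y_0$ (if $\psi(y_0)>0$ it never returns to $0$; if $\psi(y_0)=0$ it becomes positive immediately and stays positive), i.e. $\bar x_1\in X^1_>(y_1,\kappa_1(y_1))$. This is \eqref{eqn: hedonic nestedness} for $i=1$.

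The delicate point --- the main obstacle --- is regularity: the argument presupposes differentiability of $M$ (hence of $\kappa_i$, hence $\psi\in C^1$) and the validity of the co‑area differentiation above. The former follows from the implicit function theorem applied to $k_1(y,M)+k_2(y,M)=0$, since $\partial_M k_i=-1/B_i<0$ and $\partial_y k_i=A_i/B_i$ are continuous under the standing smoothness assumptions on $c_i$, $\mu_i$ and $\partial X_i$; the latter then holds along the level hypersurfaces, which meet $\partial X_i$ transversally. As a byproduct one checks that, since \eqref{eqn: condition for nested hedonic equilibrium} holds at \emph{every} $\bar x_1$ on the level curve, the weighted average $A_1/B_1$ of $(c_1)_{yy}$ exceeds $\kappa_1'=(A_1-A_2)/(B_1+B_2)$, which forces $M'(y)=(A_1B_2+A_2B_1)/(B_1+B_2)>0$; thus $M$ is genuinely strictly increasing, no $\nu$‑null subinterval occurs, and the equivalence in Theorem \ref{hedonical_nestedness} can be invoked afterwards without qualification.
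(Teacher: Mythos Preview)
Your proof is correct and follows essentially the same route as the paper: both compute $\kappa_1'(y)=(A_1-A_2)/(B_1+B_2)$ by differentiating the mass--balance identities (you differentiate the composite $y\mapsto\mu_i(X^i_\geq(y,\kappa_i(y)))$ directly, the paper goes via the chain rule on the two--variable functions $k_i(y,M)$), and both recognise that the hypothesis \eqref{eqn: condition for nested hedonic equilibrium} is, after dividing by $B_1+B_2>0$, exactly $(c_1)_{yy}(\bar x_1,y)-\kappa_1'(y)>0$ on the level curve. The only notable difference is that the paper invokes \cite[Corollary~5.3]{PassM2one} to pass from this pointwise differential inequality to the global containment \eqref{eqn: hedonic nestedness}, whereas you supply the first--contact argument for $\psi$ explicitly; this makes your write--up self--contained but is otherwise the same idea.
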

Therefore, if this condition and its analogue with the roles of $i=1$ and $i=2$ reversed both hold, the model is hedonically nested.
\begin{proof}
	We will show that, for the $k_1(y) =k_1(y,M(y))$ constructed above, we have  $k_1'(y) -D^2_{yy}c_1(x_1,y) <0$ throughout $X_=^1(y,k_1(y))$.  This will imply \eqref{eqn: hedonic nestedness}, as in \cite[Corollary 5.3]{PassM2one}.
	The equation $\mu_1(X^1_{\geq}(y,k_1(y,M)))=M$ implicitly defines the function $k_1$; differentiating and using the formulas in \cite{PassM2one} for the derivatives of $\mu_1(X^1_{\geq}(y,k_1))$ with respect to $y$ and $k_1$ yields
	$$
	\frac{\partial k_1}{\partial M} =-\dfrac{1}{\displaystyle\int_{X^1_=(y, k_1)}\left[\dfrac{1}{|D^2_{x_1y}c_1|}(x_1,y)\right]\bar \mu_1(x_1)d\mathcal H^{m_1-1}(x_1)}
	$$
	and
	$$
	\frac{\partial k_1}{\partial y} =\dfrac{\displaystyle\int_{X^1_=(y, k_1)}\left[\dfrac{D^2_{yy}c_1}{|D^2_{x_1y}c_1|}(x_1,y)\right]\bar \mu_1(x_1)d\mathcal H^{m_1-1}(x_1)}{\displaystyle\int_{X^1_=(y, k_1)}\left[\dfrac{1}{|D^2_{x_1y}c_1|}(x_1,y)\right]\bar \mu_1(x_1)d\mathcal H^{m_1-1}(x_1)}.
	$$
	
	Now, the equation $k_1(y,M) +k_2(y,M) =0$ implicitly defines $M(y)$; differentiating, we have
	$$
	\frac{\partial k_2}{\partial y} +\frac{\partial k_1}{\partial y} + \Big[\frac{\partial k_1}{\partial M}+\frac{\partial k_2}{\partial M}\Big]M'(y)=0
	$$
	or $M'(y) =-\dfrac{\frac{\partial k_2}{\partial y} +\frac{\partial k_1}{\partial y}}{\frac{\partial k_1}{\partial M}+\frac{\partial k_2}{\partial M}}$.  
	
	Using this, and the fact that $k_1'(y) =\frac{\partial k_1}{\partial y} + \frac{\partial k_1}{\partial M}M'(y)$, the rest of the argument is a straightforward calculation.  
	
	For ease of notation, we set \[A_i = \int_{X^i_=(y, k_i)}\Big[\frac{D^2_{yy}c_i}{|D^2_{x_iy}c_i|}(x_i,y)\Big]\bar \mu_i(x_i)d\mathcal H^{m_i-1}(x_i)\] and \[B_i=\int_{X^i_=(y, k_i)}\Big[\frac{1}{|D^2_{x_iy}c_i|}(x_i,y)\Big]\bar \mu_i(x_i)d\mathcal H^{m_i-1}(x_i).\]  We then have
	\begin{eqnarray*}
		k_1'(y) &=& \frac{A_1}{B_1} - \frac{1}{B_1}\frac{\frac{A_1}{B_1}+\frac{A_2}{B_2}}{\frac{1}{B_1} +\frac{1}{B_2}}\\
		&=&\frac{A_1}{B_1} - \frac{1}{B_1}\frac{A_1B_2+A_2B_1}{B_1+B_2}\\
		&=&\frac{A_1(B_1+B_2) -A_1B_2-A_2B_1}{B_1(B_1+B_2)}\\
		&=&\frac{A_1-A_2}{B_1+B_2}.
	\end{eqnarray*}
	Therefore, a sufficient condition for nestedness is that, for all $\bar x_1 \in X_=^1(y, k_1)$
\begin{equation}
\label{cond}
\frac{A_1-A_2}{B_1+B_2} -D^2_{yy}c_1(\bar x_1,y) < 0, 
\end{equation}	
where
\begin{equation*}
  \begin{split}
  A_1-A_2 = &\int_{X^1_=(y, k_1)}\left[\frac{D^2_{yy}c_1}{|D^2_{x_1y}c_1|}(x_1,y)\right]\bar \mu_1(x_1)d\mathcal H^{m_1-1}(x_1)\\& -\int_{X^2_=(y, k_2)}\left[\frac{D^2_{yy}c_2}{|D^2_{x_2y}c_2|}(x_2,y)\right]\bar \mu_2(x_2)d\mathcal H^{m_2-1}(x_2)
  \end{split}
 \end{equation*}
 and
 \begin{equation*}
 \begin{split}
 B_1+B_2&=\int_{X^1_=(y, k_i)}\left[\frac{1}{|D^2_{x_1y}c_1|}(x_1,y)\right]\bar \mu_1(x_1)d\mathcal H^{m_1-1}(x_1)\\& +\int_{X^2_=(y, k_2)}\left[\frac{1}{|D^2_{x_2y}c_2|}(x_2,y)\right]\bar \mu_2(x_2)d\mathcal H^{m_2-1}(x_2).
 \end{split}
 \end{equation*}	
Thus, multiplying \eqref{cond} by $B_1+B_2$, we obtain \eqref{eqn: condition for nested hedonic equilibrium}.
\end{proof}
The lemma then yields the following result.
\begin{corollary}
	Assume that $c_1$ and $c_2$ are uniformly convex and that both the condition in the previous lemma, and the analogous condition obtained by reversing roles of $i=1$ and $i=2$, hold.  Then problem \eqref{eqn: hedonic problem}  is hedonically nested. 
\end{corollary}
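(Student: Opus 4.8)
The plan is to read off this corollary directly from the preceding lemma, applied once for each index. Recall that, by definition, problem \eqref{eqn: hedonic problem} is \emph{hedonically nested} exactly when the containment \eqref{eqn: hedonic nestedness} holds for \emph{both} $i=1$ and $i=2$, with $M(\cdot)$ the function produced by the zero-of-$k_1+k_2$ construction. The lemma, under uniform convexity of the $c_i$ in $y$ together with the differential inequality \eqref{eqn: condition for nested hedonic equilibrium} at every admissible triple $(\bar x_1,\bar x_2,y)$, already yields \eqref{eqn: hedonic nestedness} for $i=1$. So the only remaining point is that the same lemma, with the labels $1$ and $2$ interchanged, yields \eqref{eqn: hedonic nestedness} for $i=2$ under the ``analogous'' hypothesis.

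First I would observe that the setup of the lemma is symmetric under $1\leftrightarrow 2$: the construction of $M(y)$ as the unique zero of $M\mapsto k_1(y,M)+k_2(y,M)$ is symmetric in the two indices, and the compatibility constraint in the lemma --- namely $k_1+k_2=0$ together with $\mu_1(X^1_\leq(y,k_1))=\mu_2(X^2_\leq(y,k_2))$ --- is likewise symmetric, since $\mu_i(X^i_\leq(y,k_i))=1-\mu_i(X^i_\geq(y,k_i))$ just records that both level sets sit at the common mass level $M=M(y)$. Consequently the proof of the lemma carries over after the swap: the computation $k_1'(y)=(A_1-A_2)/(B_1+B_2)$ becomes $k_2'(y)=(A_2-A_1)/(B_1+B_2)$, the sufficient condition $k_2'(y)-D^2_{yy}c_2(\bar x_2,y)<0$ along $X^2_=(y,k_2(y))$ is precisely \eqref{eqn: condition for nested hedonic equilibrium} with the subscripts $1,2$ swapped and the sign of $A_1-A_2$ reversed (this is the ``analogous condition'' of the statement), and, as in \cite[Corollary 5.3]{PassM2one}, this inequality forces the containment \eqref{eqn: hedonic nestedness} for $i=2$.

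Putting the two applications together, \eqref{eqn: hedonic nestedness} holds for $i=1$ and for $i=2$, which is exactly the definition of hedonic nestedness, finishing the proof. There is no genuine analytic obstacle here, as the substance is packaged in the lemma; the only step requiring care is checking that the ``reversed'' hypothesis really is the mirror image of the original one --- i.e. that swapping indices throughout the proof of the lemma converts the sufficient condition into \eqref{eqn: condition for nested hedonic equilibrium} with $1$ and $2$ interchanged, which the explicit form of $k_i'(y)$ above makes transparent. For completeness one might also recall that $M(y)$, which must be defined for \eqref{eqn: hedonic nestedness} to be meaningful, is well defined on $\Y$ by the lemma guaranteeing that $M\mapsto k_1(y,M)+k_2(y,M)$ has a unique zero (this is where one-dimensionality of $\Y$ enters), so the two containments can indeed be combined over all of $\Y$.
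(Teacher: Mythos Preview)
Your proposal is correct and matches the paper's approach: the paper does not give an explicit proof of this corollary, simply noting that ``the lemma then yields the following result,'' and your argument spells out exactly that --- apply the preceding lemma once as stated to get \eqref{eqn: hedonic nestedness} for $i=1$, then apply it with the indices swapped (using the analogous hypothesis) to get \eqref{eqn: hedonic nestedness} for $i=2$, and conclude by the definition of hedonic nestedness.
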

\begin{remark}
	The first and third term in \eqref{eqn: condition for nested hedonic equilibrium} represent the difference between a weighted average of the positive function $D^2_{yy}c_1$ over the potential level set $X^1_=(y_1,k_1)$ and its value at a particular point, scaled by the total weighted mass $\int_{X^1_=(y, k_1)}\frac{1}{|D^2_{x_1y}c_1|}(x_1,y)\bar \mu_1(x_1)d\mathcal H^{m_1-1}(x_1)$ of that level set.  The second and fourth terms are both negative and have related interpretations.  One can ensure \eqref{eqn: condition for nested hedonic equilibrium} holds by imposing bounds on the variation of the $D^2_{yy}c_i$ over the potential level sets, and that the differences between the total masses 
	\begin{equation}\label{eqn: level set mass}
	\int_{X^i_=(y, k_i)}\left[\frac{1}{|D^2_{x_iy}c_i|}(x_i,y)\right]\bar \mu_i(x_i)d\mathcal H^{m_i-1}(x_i)
	\end{equation}
	are not too large whenever the super-level sets $X^i_\geq(y_i, k_1)$ have the same mass.
	
	As a very basic example,  if $\mu_1 =\mu_2$ and 
	$c_1=c_2:=c(I(x),y)$ are both the same index cost, the conditions hold automatically, since equality between the costs and marginals implies that the level set masses are identical and 
	the index form ensures that $c_{yy}$ does not vary throughout any level set $X_=(y,k)$.  Perturbations of the form $c_i(x_i,y) = c(I(x),y) +\epsilon \bar c_i(x_i,y)$ will still be nested for small $\epsilon$ and smooth $\bar c_i$; the variation of $D^2_{yy}c_i$ will be small, and so the first and fourth term
	 as will the difference in the masses \eqref{eqn: level set mass} of mass splitting level sets.
\end{remark}

\appendix
\section{A improved bound implying nestedness for the example in Corollary \ref{cor: disk to circle full}}
\label{App}

In this appendix we  refine the computations in Corollary \ref{cor: disk to circle full} and improve the bound on $\bar y$ by exploiting  local information on $k(y)$.\\
We, firstly, recall that in this example the level sets are given by
\[ x_1\sin(y)-x_2\cos(y)=k(y), \]
which can be also be re-written as
\begin{equation}
\label{line}
x_2=\tan(y)x_1-\dfrac{k(y)}{\cos(y)}. 
\end{equation}
Note, now, that for $y=0$, the mass splitting level set corresponds to $k(0) =0$, and so $X_=(0,k(0)) =X_=(0,0)$ is the $x_1$ axis.  
This implies that we cannot have nestedness (at least not for the full closed interval $[0,\bar y]$; that is, \eqref{eqn: 1-d nestedness} will fail with $y_0=0$) unless $k(y)<0$, in which case $X_=(y,k(y))$ intersects the boundary of the quarter disk on the $x_2$ axis.
If $k(y)>0$, it is clear by \eqref{line} that $X_=(y,k(y))$ would intersect the $X_=(0,k(0))$ and nestedeness would fail.
Therefore, we take $k(y) <0$, and   $X_=(y_0,k(y))$ intersects the $x_2$-axis at $(0,-\dfrac{k(y)}{\cos(y)})$.  Nestedness is equivalent to this point $-\dfrac{k(y)}{\cos(y)}$ of intersection being monotone increasing.  Now,  the differential equation  \eqref{MAmulti2one}\footnote{Note that this equation holds even when the model is not nested, by \eqref{eqn: def of k}, as the left hand side is simply the derivative of $\mu(X_{\geq}(y,k(y)))$, as in \cite{PassM2one} (in the non-nested case, however, $k(y)$ is not the derivative of a $c$-concave potential).} for $k(y)$ reads
$$
\int_{X_=(y,k(y))}[x\cdot(\cos(y),\sin(y)) -k'(y)]\frac{4}{\pi}d\mathcal{H}^1(x)=\bar \nu(y)
$$
We can parameterize  $X_\geq(y,k(y)=\{(0,-\frac{k(y)}{\cos(y)}) +t(\cos(y),\sin(y)): t\in(0,L(y,k(y)))\}$, where $L(y,k) = \sqrt{1-k^2} +k\tan(y)$ is the length of the level set $X_=(y,k)$; evaluating the integral above then yields



$$
[-k(y)\tan(y)-k'(y)]L(y,k(y))\frac{4}{\pi} +\frac{L^2(Y,k(y))}{2}\frac{4}{\pi} =\bar \nu(y).
$$
Identifying the first term on the left hand side above as $\cos(y)$ times the derivative of $\frac{-k(y)}{\cos(y)}$, we have that nestedness is equivalent to 
\begin{equation}\label{eqn: sharper bound for nestedenss}
\frac{2}{\pi}L^2(y,k(y)) \leq \bar \nu(y)
\end{equation}
for all $y$. Note that $L(y,(k(y))) \leq 1$, with equality if $k(y)=0$.  This inequality gives the sufficient condition \eqref{eqn: sufficient lower bound} for nestedness in Example \ref{ex: disk to arc}.  However, $k(0) =0$, and our bound \eqref{eqn: lower bound on CN density} on $\bar \nu$ is strongest at this point.  As $y$ increases, $k(y)$ will increase  and so the required bound \eqref{eqn: sharper bound for nestedenss} needed for nestedness becomes less stringent.

Using the bound \eqref{eqn: lower bound on CN density}, 
 we get the following sufficient condition for nestedness:

$$
e^yL^2(y,k(y)) \leq \frac{\pi}{2(e^{\bar y} -1)}
$$. 


It is straightforward to find $$\mu(X_\geq(y,k)) =[-\frac{k}{2} L(y,k) +\frac{y-\arcsin(k(y))}{2}]\frac{4}{\pi},$$ since this set is the disjoint (up to negligible sets) union of a wedge and a triangle. Mass balance then implies that $$[-\frac{k(y)}{2} L(y,k(y)) +\frac{y-\arcsin(k)}{2}]\frac{4}{\pi}= \nu(0,y)=\int_0^{\bar y}\bar \nu(y)dy \geq  \frac{1-e^{-y}}{e^{\bar y}-1},$$ or
$$
-k(y) L(y,k(y)) +y-\arcsin(k(y))\geq  \frac{\pi}{2}\frac{1-e^{-y}}{e^{\bar y}-1} 
$$
Now, the function $k \mapsto -k L(y,k)+y-\arcsin(k)$ is clearly decreasing (since as $k$ gets smaller, $\mu(X_\geq(y,k))$ must increase); denote its inverse by $Z(y;\cdot)$, so that the above is equivalent to
$$
k(y) \leq Z(y;\frac{\pi}{2}\frac{1-e^{-y}}{e^{\bar y}-1} ).
$$
Since $L$ is increasing in $k$, we have that 
$$
L(y,k(y)) \leq L(y,Z(y;\frac{\pi}{2}\frac{1-e^{-y}}{e^{\bar y}-1} )) 
$$
and so a sufficient condition for nestedness is that, for all $y$:

$$
e^yL^2(y,Z(y;\frac{\pi}{2}\frac{1-e^{-y}}{e^{\bar y}-1} )) \leq  \frac{\pi}{2(e^{\bar y} -1)}.
$$

Since the left hand side is increasing in $\bar y$, its maximum over $y$ is also increasing in $\bar y$.  The left hand side is decreasing in $\bar y$.  Therefore, the inequality holds if and only if $\bar y \leq \tilde y$ for some $\tilde y$.  Numerically, we can determine that $\tilde y \approx 0.65806$.

\bibliographystyle{plain}

\bibliography{bibli}

\end{document}